\DeclareMathAlphabet{\mathpzc}{OT1}{pzc}{m}{it}
\newcommand{\spaces}{\mathbf{Top}}
\newcommand{\piz}{\pi_0}
\newcommand{\pizx}{\pi_0(X)}
\newtheorem{theorem}{Theorem}
\newtheorem{lemma}[theorem]{Lemma}
\newtheorem{proposition}[theorem]{Proposition}
\newtheorem{corollary}[theorem]{Corollary}
\theoremstyle{definition}\newtheorem{definition}[theorem]{Definition}
\theoremstyle{definition}\newtheorem{example}[theorem]{Example}
\theoremstyle{definition}
\theoremstyle{definition}\newtheorem{problem}[theorem]{Problem}
\theoremstyle{definition}\newtheorem{remark}[theorem]{Remark}
\numberwithin{theorem}{section}
\keywords{path-component, quotient space, fundamental group, free topological group}
\subjclass[2010]{Primary 55Q52, 58B05, 54B15; Secondary 22A05, 54C10, 54G15}
\begin{document}

\title[Path-component spaces]{Realizing spaces as path-component spaces}
\author{Taras Banakh, Jeremy Brazas}
\begin{abstract}
The path component space of a topological space $X$ is the quotient space $\pi_0(X)$ whose points are the path components of $X$. We show that every Tychonoff space $X$ is the path-component space of a Tychonoff space $Y$ of weight $w(Y)=w(X)$ such that the natural quotient map $Y\to \pi_0(Y)=X$ is a perfect map. Hence, many topological properties of $X$ transfer to $Y$. We apply this result to construct a compact space $X\subset \mathbb{R}^3$ for which the fundamental group $\pi_1(X,x_0)$ is an uncountable, cosmic, $k_{\omega}$-topological group but for which the canonical homomorphism $\psi:\pi_1(X,x_0)\to \check{\pi}_1(X,x_0)$ to the first shape homotopy group is trivial.
\end{abstract}
\maketitle
\section{Introduction}
The \textit{path-component space} of a topological space $X$ is the set $\pi_0(X)$ of path components of $X$ equipped with the natural quotient topology inherited from $X$, i.e the natural map $X\to \pi_0(X)$ identifying each path component to a point is a quotient map. The path-component space of the space $\Omega(X,x_0)$ of based loops $S^1\to X$ is the usual fundamental group $\pi_1(X,x_0)$ equipped with a functorial topology that gives it the structure of a quasitopological group. Recent interest and applications of homotopy groups enriched with such topologies has brought more relevance to path-component spaces.

It is a beautiful and surprising result of Douglas Harris that every topological space is a path-component space \cite{Har80}. In particular, for every space $X$, Harris constructed a paracompact Hausdorff space $H(X)$ and a natural homeomorphism $\pi_0(H(X))\cong X$. Harris' result plays a key role in the application of the topological fundamental group to prove that every topological group is isomorphic to the topological fundamental group of some space \cite{Braztopgrp}. In particular, the free Markov topological group $F_M(X)$ on a space $X$ is isomorphic to the topological fundamental group of the reduced suspension $\Sigma H(X)_{+}$ of $H(X)_+=H(X)\cup \{\ast\}$ with an isolated basepoint. Harris' result is used in a similar fashion to prove a topological group analogue of the Nielsen-Schreier Theorem \cite{Br12NS}.

A remaining problem of relevance to topological fundamental groups is: given a particular class of spaces $\mathcal{C}$, identify a subclass $\mathcal{D}$ such that for every $Y\in\mathcal{C}$, we have $Y\cong \pi_0(X)$ for some $X\in\mathcal{D}$.

For instance, Banakh, Vovk, and W\' ojcik  show in \cite{BVW} that every first countable space $X$ is the path-component space of a complete metric space $\varoast X$ called the \textit{cobweb} of $X$. The construction of $\varoast X$ is actually quite similar to that of $H(X)$, however, it is not necessarily compact or separable when $X$ is. These results suggest that analogous constructions might be possible for other classes of spaces. In the current paper, we prove the following Theorem using techniques quite different from those in \cite{Har80} and \cite{BVW}.

\begin{theorem}\label{mainthm}
Every Tychonoff space $X$ is the path-component space of a Tychonoff space $Y$ of weight $w(Y)=w(X)$ such that the quotient map $q_Y:Y\to \pi_0(Y)=X$ is perfect, i.e. closed with compact preimages of points.
\end{theorem}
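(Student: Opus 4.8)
The plan is to factor the problem through the Tychonoff cube and to concentrate all of the ``path-breaking'' in a single compact gadget, exploiting that products and perfect maps interact well with path components.

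\emph{Reduction to a factor.} Since $X$ is Tychonoff of weight $\kappa:=w(X)$, fix an embedding $X\hookrightarrow [0,1]^{\kappa}=I^{\kappa}$. I claim it suffices to construct a compact metrizable space $W$ together with a continuous surjection $q\colon W\to[0,1]$ whose point-preimages are exactly the path components of $W$; equivalently $q$ is the path-component quotient and $\pi_0(W)\cong[0,1]$. Indeed, given such a $W$, consider the power $q^{\kappa}\colon W^{\kappa}\to[0,1]^{\kappa}=I^{\kappa}$. By Tychonoff's theorem $W^{\kappa}$ is compact Hausdorff, so $q^{\kappa}$, being a continuous surjection from a compact space onto a Hausdorff space, is automatically closed with compact fibers, i.e.\ perfect, and in particular a quotient map. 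Two points of a product lie in the same path component iff they do so in every coordinate, so the path components of $W^{\kappa}$ are precisely the products of path components of the factors, which are exactly the fibers of $q^{\kappa}$ (each such fiber is a product of path-connected sets, hence path-connected). Thus $q^{\kappa}$ realizes $\pi_0(W^{\kappa})\cong I^{\kappa}$.

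\emph{Pulling back to $X$.} Set $Y:=(q^{\kappa})^{-1}(X)\subseteq W^{\kappa}$ and let $q_Y$ be the restriction of $q^{\kappa}$. A perfect map restricts to a perfect map over any subspace of the base, so $q_Y\colon Y\to X$ is perfect; its fibers are unchanged, hence compact and path-connected, and since every path in $Y$ is a path in $W^{\kappa}$ its image under $q^{\kappa}$ is constant, so no path joins distinct fibers. Therefore the fibers of $q_Y$ are exactly the path components of $Y$, and being a perfect surjection $q_Y$ is a quotient map, giving $\pi_0(Y)\cong X$ with the correct topology. Finally $Y$ is Tychonoff as a subspace of the compact Hausdorff space $W^{\kappa}$, and $w(Y)=w(X)$: on one hand $w(W^{\kappa})\le \kappa\cdot\aleph_0=\kappa$, so $w(Y)\le\kappa$; on the other hand the standard weight inequalities for perfect maps (the fibers being products of at most $\kappa$ compact metrizable spaces) give $w(X)\le w(Y)$.

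\emph{The gadget and the main obstacle.} It remains to build $W$, and this is where the essential difficulty lies: one must realize the \emph{path-connected} space $[0,1]$ as a path-component space, so the construction is necessarily ``wild.'' I would build $W$ as a generalized topologist's sine curve. Let $C\subset[0,1]$ be the Cantor set and $\psi\colon C\to[0,1]$ the Cantor surjection, which identifies exactly the two endpoints $a,b$ of each complementary gap. Over each $c\in C$ place a ``tooth'' $\{c\}\times[0,1]$, and for each gap $(a,b)$ attach an arc joining the tops of the teeth over $a$ and $b$; to keep these arcs from chaining together into a single connected curve I route them through additional coordinates (working in the Hilbert cube), letting the excursion of the arc over a gap shrink with the length of the gap. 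Then the arcs over the gaps accumulating at a point $c\in C$ converge onto the $c$-tooth exactly as the oscillations of $\sin(1/x)$ converge onto the limit segment, while each arc meets the ``base'' only at its own two endpoints. Define $q$ to be $\psi$ on the teeth and constant on each arc. The crux is the \emph{no-crossing lemma}: the path components of $W$ are precisely the fibers of $q$, i.e.\ $q\circ\gamma$ is constant for every path $\gamma$ in $W$. This is the generalized topologist's-sine-curve phenomenon --- a path approaching a limit tooth is trapped by the accumulating oscillating arcs and cannot pass to another $\psi$-class --- and I expect its careful verification, together with arranging the arcs to accumulate precisely onto the limit teeth so that $W$ is compact and $q$ is continuous, to be the main technical work. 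Granting it, each fiber of $q$ is either a single tooth or two teeth joined by an arc, hence compact and path-connected, so $q$ is a perfect quotient map whose fibers are exactly the path components and $\pi_0(W)\cong[0,1]$, completing the construction.
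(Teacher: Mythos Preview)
Your reduction is exactly the paper's: build a compact metric gadget $W$ with $\pi_0(W)\cong[0,1]$, take the $\kappa$-th power so that $q^{\kappa}\colon W^{\kappa}\to[0,1]^{\kappa}$ is perfect with fibers equal to path components, and pull back along an embedding $X\hookrightarrow[0,1]^{\kappa}$; the weight and perfectness bookkeeping you give matches the paper's Proposition~\ref{productsofpcs}, Corollary~\ref{compactpcs}, and Lemma~\ref{basicprop}. The only substantive difference is the gadget itself. The paper's $K$ is \emph{planar}: over the Cantor set one takes vertical segments $\{c\}\times[0,1]$, and over each complementary interval one places a horizontal segment at height $1$ if the interval has length $3^{-k}$ with $k$ odd and at height $0$ if $k$ is even. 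The no-crossing argument is then a clean oscillation trap: any path between distinct fibers would be forced, by the intermediate value theorem, to hit midpoints of infinitely many alternating odd/even gaps, producing a sequence in the image converging simultaneously to height $1$ and height $0$. Your Hilbert-cube sketch is in the same spirit but, as written, contains a tension you should resolve: if the arcs' excursions \emph{shrink} with the gap length, they accumulate only on the \emph{tops} $(c,1)$ of the limit teeth, not on the whole segment $\{c\}\times[0,1]$, so the ``$\sin(1/x)$'' accumulation you invoke does not occur and you lose the oscillation obstruction; what actually blocks paths in a construction like yours is a different (and more delicate) argument via the total disconnectedness of $C$ in the first coordinate. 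The paper's alternating-height device avoids this entirely and keeps $K\subset\mathbb{R}^2$.
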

A topological property $\mathscr{P}$ is \textit{inversely preserved by perfect maps} if whenever $f:Y\to X$ is a perfect map and $X$ has property $\mathscr{P}$, then so does the preimage $f^{-1}(X)$. Such properties are numerous and include compactness, paracompactness, metrizability, Lindel\"of number, \v Cech completeness, Borel type, etc. We refer to \cite[Section 3.7]{Eng89} for more on perfect maps as well as the column of inverse invariants of perfect maps in the table on pg. 510 of \cite{Eng89}. Theorem \ref{mainthm} implies that any topological property of $X$ that is inversely preserved by perfect maps is also a property enjoyed by the space $Y$ whose path-component space is $X$.

\begin{corollary}
Every Tychonoff space $X$ is the path-component space of a Tychonoff space $Y$ such that $Y$ shares any topological property of $X$ that is inversely preserved by perfect maps.
\end{corollary}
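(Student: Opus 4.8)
The plan is to obtain this corollary as an immediate consequence of Theorem~\ref{mainthm}, exploiting the fact that the single space $Y$ produced there already carries a perfect quotient map onto $X$, so that no further construction is required. First I would fix an arbitrary topological property $\mathscr{P}$ that is inversely preserved by perfect maps and assume that the Tychonoff space $X$ enjoys $\mathscr{P}$. Applying Theorem~\ref{mainthm} then yields a Tychonoff space $Y$ together with a perfect quotient map $q_Y\colon Y\to\pi_0(Y)=X$; here I would simply discard the stronger conclusion $w(Y)=w(X)$, which the corollary does not need.

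The heart of the argument is the observation that $Y$ coincides with $q_Y^{-1}(X)$, so the definition of ``inversely preserved by perfect maps'' applies verbatim: since $q_Y$ is perfect and its codomain $X$ has $\mathscr{P}$, the domain $Y$ also has $\mathscr{P}$. Because $\mathscr{P}$ was arbitrary among such properties, the one fixed space $Y$ simultaneously inherits every property of $X$ in this class, which by the discussion following Theorem~\ref{mainthm} includes compactness, paracompactness, metrizability, the Lindel\"of number, \v Cech completeness, Borel type, and the remaining inverse invariants tabulated in \cite[Section~3.7]{Eng89}.

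I expect essentially no obstacle at the level of the corollary itself, since all of the genuine content has been packaged into Theorem~\ref{mainthm}, whose proof must exhibit a Tychonoff $Y$ whose path-component space is $X$ via a map that is not merely continuous and surjective but perfect. The only point deserving a moment's care is the order of quantifiers: one must confirm that a single witness $Y$ serves for all admissible properties at once, rather than varying with $\mathscr{P}$. This is automatic precisely because Theorem~\ref{mainthm} hands us the space $Y$ and the perfect map $q_Y$ \emph{before} any property is specified, so the implication ``if $X$ has $\mathscr{P}$ then $Y$ has $\mathscr{P}$'' holds uniformly over all such $\mathscr{P}$.
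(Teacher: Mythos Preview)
Your proposal is correct and matches the paper's approach: the corollary is stated immediately after Theorem~\ref{mainthm} and the paragraph defining ``inversely preserved by perfect maps,'' with no separate proof given, precisely because it is the direct consequence you describe. Your care about the quantifier order (one $Y$ for all such $\mathscr{P}$) is a nice clarification that the paper leaves implicit.
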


As an application, we apply Theorem \ref{mainthm} to identify an interesting phenomenon in topologized fundamental groups. The path-component space of a loop space $\Omega(X,x_0)$ is the fundamental group $\pi_1(X,x_0)$ equipped with its natural quotient topology. In general, $\pi_1(X,x_0)$ need not be a topological group \cite{BFqtop}, however, it is a quasitopological group in the sense that inversion is continuous and multiplication is continuous in each variable \cite{Br10.1}. We refer to \cite{AT08} for basic theory of quasitopological groups. 

Since multiplication in $\pi_1(X,x_0)$ need not be continuous, one cannot take separation axioms for granted. While $T_0$ $\Leftrightarrow$ $T_1$ holds for all quasitopological groups it need not be the case that $T_1$ implies $T_2$ or that $T_2$ implies higher separation axioms. The relevance of separation axioms in fundamental groups was noted in \cite{BFqtop} where it is shown that if $\pi_1(X,x_0)$ is $T_1$, then $X$ has the homotopically path Hausdorff property introduced in \cite{FRVZ11}. The converse holds if $X$ is locally path connected. Furthermore, if $\pi_1(X,x_0)$ is $T_1$, then $X$ admits a generalized universal covering in the sense of \cite{FZ07}. 

There are several open questions related to the continuity of multiplication and separation axioms in quasitopological fundamental groups. We note here the open problems from \cite[Problem 36]{BFqtop}.

\begin{problem}
Must $\pi_1(X,x_0)$ be normal for every compact metric space $X$? 
\end{problem}

\begin{problem}
Is there a Peano continuum (connected, locally path-connected, compact metric space) $X$ for which $\pi_1(X,x_0)$ is $T_1$ but not $T_4$?
\end{problem}

To verify that $\pi_1(X,x_0)$ is Hausdorff, it is sufficient to know that $X$ is $\pi_1$\textit{-shape injective}, meaning the canonical homomorphism $\psi:\pi_1(X,x_0)\to \check{\pi}_1(X,x_0)$ to the first shape homotopy group is injective. Motivated by a construction in \cite{Br10.1}, we construct what is apparently the first example of a compact metric space for which $\pi_1(X,x_0)$ has been verified to be Hausdorff but for which $\psi$ is not injective. In doing so, we show in a strong way that $\pi_1$-shape injectivity is not a necessary condition for the $T_4$ separation axiom in fundamental groups of compact metric spaces.

\begin{theorem}\label{mainthm2}
There exists a compact metric space $X\subset \mathbb{R}^3$ such that $\pi_1(X,x_0)$ is an uncountable, cosmic, $k_{\omega}$-topological group such that $\psi:\pi_1(X,x_0)\to \check{\pi}_1(X,x_0)$ is the trivial homomorphism. In particular, $\pi_1(X,x_0)$ is $T_4$.
\end{theorem}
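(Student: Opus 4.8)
The plan is to realize the group as a free-type topological group on a compact metrizable space while simultaneously arranging the ambient space to have trivial shape, so that $\psi$ vanishes for the most economical reason possible. I would begin by fixing an uncountable compact metrizable space $K$ (a Cantor set being the natural choice) and applying Theorem \ref{mainthm} to obtain a \emph{compact} and \emph{metrizable} space $Y$, of countable weight and hence embeddable in the plane, with $\pi_0(Y)\cong K$ and perfect quotient map $q_Y\colon Y\to K$. The compactness and metrizability supplied by Theorem \ref{mainthm}---as opposed to the merely paracompact Harris realizer---are exactly what is needed downstream: any space $X$ that I assemble from $Y$ will again be compact metric, which forces the loop space $\Omega(X,x_0)$, with the compact--open topology, to be separable metrizable.

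From $Y$ I would build $X\subset\mathbb{R}^3$ as an archipelago-type compactum: realize the path components of $Y$ as a $K$-indexed family of based loops and attach a system of two-dimensional caps (``mountains'') of shrinking diameter. The design must accomplish two independent things. Algebraically, the caps should leave enough of the loop structure intact that infinite concatenations survive, so that $\pi_1(X,x_0)$ is uncountable and carries the algebra of the intended group $G$, with the indexing of generators by $K=\pi_0(Y)$ transporting the topology of $K$ onto the generators. Topologically, since $\pi_1(X,x_0)=\pi_0(\Omega(X,x_0))$ is a continuous quotient of the separable metric space $\Omega(X,x_0)$, it automatically has a countable network, so once $G$ is known to be a Hausdorff topological group it is cosmic; the $k_\omega$ property I would obtain by exhibiting $G$ as the direct limit of the compact images of the word-length filtration, using the perfect map of Theorem \ref{mainthm} to control how the topology of $K$ assembles into that of $G$.

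Two steps carry the real weight. First, I must show that multiplication on $\pi_1(X,x_0)$ is \emph{jointly} continuous, so that $G$ is a genuine topological group rather than only a quasitopological one, and that $G$ is $T_1$, i.e. that essential loops stay out of the closure of the nullhomotopic class. I would attack both through free-topological-group machinery: if the construction is arranged so that $\pi_1(X,x_0)$ is isomorphic to the free Markov topological group $F_M(K)$, then joint continuity and the $T_1$ axiom come for free from the theory of free topological groups on a compact metric, hence $k_\omega$, generating space. Second, I must compute the shape. Here I would show that the caps make $X$ \emph{cell-like} (of trivial shape): every polyhedral approximation in the defining neighbourhood basis is simply connected, so that $\check{\pi}_1(X,x_0)=0$ and $\psi$ is trivial for vacuous reasons. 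With $G$ established as a Hausdorff cosmic topological group, $T_4$ is then automatic, since such a group is Lindel\"of and regular and hence normal.

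The main obstacle is the tension between these two goals. The caps must be robust enough to trivialize the shape group (killing $\psi$) yet restrained enough that the quotient topology on $\pi_1$ does not collapse to a non-$T_1$ or indiscrete topology and that multiplication stays jointly continuous. Note that the usual route to Hausdorffness of a topologized fundamental group---namely $\pi_1$-shape injectivity---is deliberately unavailable here, so separation of the identity must be established directly from the controlled, compact, metric, perfect-map realizer furnished by Theorem \ref{mainthm}. Reconciling the free-topological-group structure, which requires the $K$-indexed generators to remain algebraically independent and to reproduce the topology of $K$ faithfully, with cell-likeness, which forces every generator to become shape-trivial, inside a single compactum in $\mathbb{R}^3$ is, I expect, the crux of the argument.
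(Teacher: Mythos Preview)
Your architecture—realize a compact metrizable space via Theorem~\ref{mainthm}, form a suspension-type compactum, identify $\pi_1$ as a free topological group, and read off cosmic and $k_\omega$ from the separable metric loop space and Theorem~\ref{komegathm}—matches the paper's. The decisive divergence is your choice of generating space: you take a Cantor set, whereas the paper takes the \emph{connected} interval $[0,1]$, using the explicit planar compactum $K$ of Section~3 with $\pi_0(K)=[0,1]$ and setting $X=\Sigma K^+\subset\mathbb{R}^3$, so that $\pi_1(X,x_0)\cong F_G([0,1],0)$ via Theorem~\ref{graevrealization}. That choice dissolves the ``crux'' you flag. Because $[0,1]$ is path-connected, $F_G([0,1],0)$ is a path-connected topological group (Proposition~\ref{pathconnftg}); since $\check{\pi}_1(X,x_0)$ is an inverse limit of discrete groups and hence totally path-disconnected, every continuous homomorphism from a path-connected group into it is trivial. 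Triviality of $\psi$ thus comes from path-connectedness of the \emph{domain}, with no cell-likeness argument needed.

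With a Cantor set $C$ your group $F_M(C)$ is \emph{not} path-connected (the continuous surjection $F_M(C)\to\mathbb{Z}$ sending each generator to $1$ shows this), so you are forced onto your alternative route: make $X$ cell-like while keeping $\pi_1(X)\cong F_M(C)$. This is not merely the hard part but a genuine gap. Archipelago-style attachment of shrinking caps does trivialize shape, but it also destroys the free-group algebra—individual generating loops become null-homotopic, so $\pi_1$ is no longer free on $C$ (compare the harmonic archipelago, whose $\pi_1$ is far from free). Conversely, the construction $W(Y)$ that \emph{does} give $\pi_1\cong F_M(\pi_0(Y))$ (Theorem~\ref{fmpione}) has nontrivial shape when $\pi_0(Y)$ is totally disconnected. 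You have correctly located the tension but not a construction that threads it; the paper's point is that choosing a connected generating space removes the tension altogether. (A minor slip: countable weight yields an embedding into the Hilbert cube, not into $\mathbb{R}^2$; planarity of the realizer comes from the explicit model $K\subset[0,1]^2$, not from abstract weight considerations.)
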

\section{Path-component spaces}
A \textit{path} in a topological space $X$ is a continuous function $\alpha:[0,1]\to X$ from the closed unit interval $[0,1]$. The \textit{path component} of a point $x\in X$ is set \[[x]=\{y\in X\text{ }|\text{ }\exists \text{ a path }\alpha:[0,1]\to X\text{ with }\alpha(0)=x\text{ and }\alpha(1)=y\}.\]The relation $[x]=[y]$, $x,y\in X$ is an equivalence relation on $X$; an equivalence class is called a \textit{path component} of $X$.

\begin{definition}
The \textit{path-component space} $\pi_0(X)$ of a topological space $X$, is the set $\pi_0(X)=\{[x]\text{ }|\text{ }x\in X\}$ of path components of $X$ equipped with the quotient topology with respect to the map $q_{X}:X\to \pi_0(X)$, $q(x)=[x]$ that collapses each path component to a point.
\end{definition}

If $f:X\to Y$ is a map, then $f([x])\subseteq [f(x)]$. Thus $f$ determines a well-defined map $f_{0}:\piz(X)\to \piz(Y)$ given by $f_0([x])=[f(x)]$. Moreover, $f_0$ is continuous since $f_{0}\circ q_{X}=q_{Y}\circ f$ and $q_X$ is quotient. Altogether, we obtain an endofunctor of the usual category $\spaces$ of topological spaces.
\begin{proposition} 
$\piz:\spaces\to \spaces$ is a functor and the quotient maps $q_{X}:X\to \pi_0(X)$ are the components of a natural transformation $q:id\to \pi_0$.
\end{proposition}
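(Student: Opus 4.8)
The plan is to verify the two functor axioms and then the naturality square, all of which reduce to the pointwise formula $f_0([x]) = [f(x)]$ already established in the preceding paragraph. First I would check functoriality. For the identity $\mathrm{id}_X : X \to X$, the induced map $(\mathrm{id}_X)_0$ sends $[x] \mapsto [\mathrm{id}_X(x)] = [x]$, so $(\mathrm{id}_X)_0 = \mathrm{id}_{\pi_0(X)}$. For composition, given maps $f : X \to Y$ and $g : Y \to Z$, I would compute $(g \circ f)_0([x]) = [(g\circ f)(x)] = [g(f(x))] = g_0([f(x)]) = g_0(f_0([x]))$, so $(g\circ f)_0 = g_0 \circ f_0$. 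Both identities hold on the level of underlying functions, and continuity of each $f_0$ was already noted in the excerpt, so no further topological argument is needed here.

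Next I would address naturality. The claim is that the family $\{q_X\}_X$ constitutes a natural transformation $q : \mathrm{id} \Rightarrow \pi_0$, where $\mathrm{id}$ is the identity functor on $\spaces$ and $\pi_0$ is the functor just verified. For each map $f : X \to Y$ I must show the square with top edge $f$, bottom edge $f_0$, and vertical edges $q_X$ and $q_Y$ commutes, i.e. $f_0 \circ q_X = q_Y \circ f$. But this is exactly the relation $f_0 \circ q_X = q_Y \circ f$ that was invoked in the excerpt to prove continuity of $f_0$: for any $x \in X$, $(f_0 \circ q_X)(x) = f_0([x]) = [f(x)] = q_Y(f(x)) = (q_Y \circ f)(x)$. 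So the square commutes on points, hence as a diagram of maps.

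There is no genuine obstacle in this proposition: every assertion is a restatement of the pointwise identity $f_0([x]) = [f(x)]$, and the only topological input (continuity of the induced maps and the quotient property) has already been supplied in the text preceding the statement. The one point deserving a word of care is simply to confirm that each $q_X$ is indeed a morphism in $\spaces$, which holds by definition since $\pi_0(X)$ carries the quotient topology making $q_X$ continuous. With functoriality and the commuting square in hand, the proposition follows, and I would present it as a short verification rather than an argument with a nontrivial core step.
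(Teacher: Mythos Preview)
Your proposal is correct and matches the paper's approach exactly: the paper states this proposition without proof, treating it as an immediate consequence of the preceding paragraph where $f_0([x])=[f(x)]$ and the identity $f_0\circ q_X=q_Y\circ f$ are recorded. Your write-up simply makes explicit the routine verifications (preservation of identities, composition, and the naturality square) that the paper leaves to the reader.
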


Certainly, if $X$ is locally path connected, then $\pizx$ is a discrete space. More generally, $\pizx$ is discrete $\Leftrightarrow$ every path component of $X$ is open $\Leftrightarrow$ $X$ is \textit{semi-locally} $0$-\textit{connected} in the sense that for every point $x\in X$, there is a neighborhood $U$ of $x$ such that the inclusion $i:U\to X$ induces the constant map $i_0:\piz(U)\to \pizx$.

From the following basic examples, we begin to see that separation axioms are not passed to path-component spaces even if $X$ is a compact metric space.

\begin{example}\label{sinecurveexample}
 If $P_1=\{0\}\times [-1,1]$ and $P_2=\{(x,-\sin(\pi/x))|x\in (0,1]\}$ so that $T=P_1\cup P_2$ is the closed topologists sine curve, then $\pi_0(X)=\{P_1,P_2\}$ is the two-point Sierpinski space with topology $\{\emptyset, \{P_2\},X\}$. Thus $\pi_0(X)$ is $T_0$ but not $T_1$.
\end{example}

\begin{example}
Given any discrete space $V$, let $\Gamma$ be a maximal tree in the complete graph with vertex set $V$. We construct the space $X$ from $V$ as follows: for each edge $[x,y]$ of $\Gamma$, attach two copies of $T$ (as in the previous example) to $V$, the first by $(0,0)\sim x$ and $(1,0)\sim y$ and the second by $(0,0)\sim y$ and $(1,0)\sim x$. It is easy to see that the resulting space $X$ has indiscrete path-component space $\pi_0(X)=\{[v]|v\in V\}$ with the same cardinality as $V$.
\end{example}

\begin{example}
A compact metric space has cardinality at most that of the continuum and so its path-component space must be restricted in the same way. Consider the following path-connected subspaces of the xy-plane where $[a,b]$ denotes the line segment from $a$ to $b$.
\begin{itemize}
\item $P_1=\{(x,2-\sin(\pi/x))|0<x\leq 1\}\cup [(1,2),(0,-2)]\cup (\{0\}\times [-3,-1])$
\item $P_2=\{(-x,-2+\sin(\pi/x))|0<x\leq 1\}\cup [(-1,-2),(0,2)]\cup (\{0\}\times [1,3])$
\end{itemize}
Let $D=P_1\cup P_2$ (See Figure \ref{indiscrete}) and note that $\pi_0(D)=\{P_1,P_2\}$ is indiscrete since each path component is neither open nor closed. Now let $C\subset [0,1]$ be a Cantor set in the z-axis and take $X=(D\times C)\cup (\{0\}\times[1,3]\times [0,1])$. Including the rectangle $\{0\}\times[1,3]\times [0,1]$ has the effect of combining the path components $P_1\times \{c\}\in \pi_0(D\times C)$, $c\in C$ into a single path component. Thus $X\subset \mathbb{R}^3$ is a compact metric space such that $\pi_0(X)$ is uncountable and indiscrete. 
\end{example}
\begin{figure}[H]
\centering \includegraphics[height=2in]{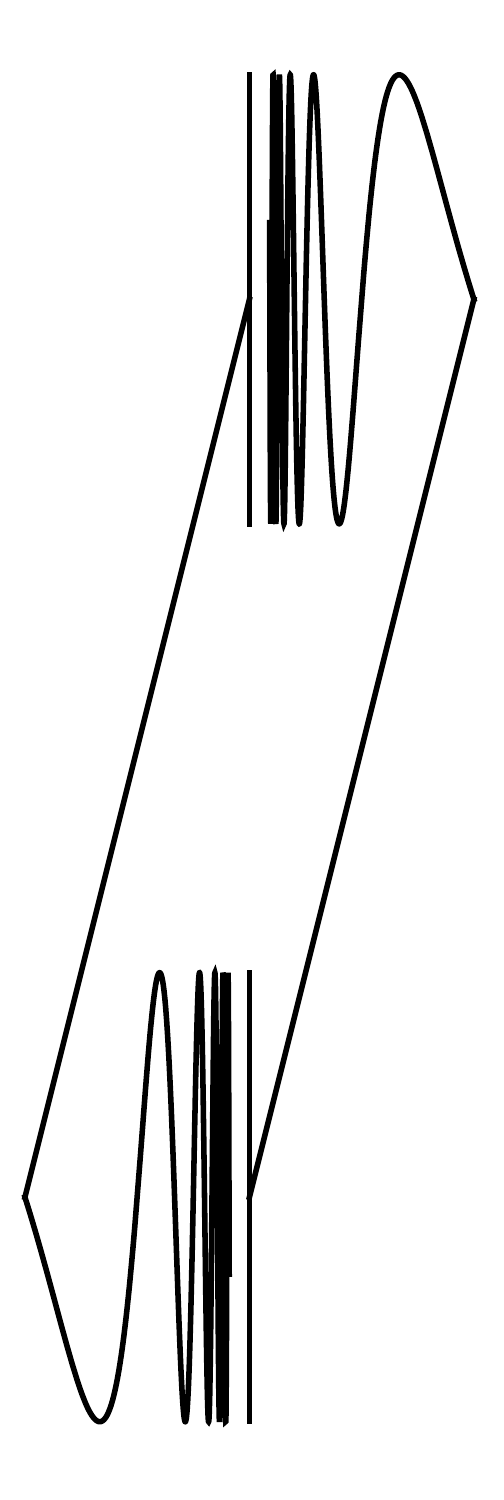}
\caption{\label{indiscrete}The space $D=P_1\cup P_2$.}
\end{figure}

At the other extreme, the natural quotient map $q_X:X \to\pizx$ is a homeomorphism if and only if $X$ is totally path disconnected, i.e. admits no non-constant paths. In general, $\pizx$ need not be totally path-disconnected. The existence of non-constant paths in $\pi_0(X)$ seems strange at first since it may permit cases where $\pi_0(\pi_0(X))$ is non-trivial. We make clear the relevance of this situation in our discussion on fundamental groups.

\begin{example}
Let $L$ be a linearly ordered space and $X=L\times [0,1]$ have the order topology induced by the lexicographical ordering. Certainly, the sets $\{\ell\}\times [0,1]$ are path-connected. If $\alpha:[0,1]\to X$ is a path from $(\ell_1,t_1)$ to $(\ell_2,t_2)$ where $\ell_1<\ell_2$, then by the intermediate value theorem the image of $\alpha$ contains all $(\ell,t)$ with $(\ell_1,t_1)\leq (\ell,t)\leq (\ell_2,t_2)$. By composing with the projection $L\times [0,1]\to L$, we see that $[\ell_1,\ell_2]$ is path connected and Hausdorff and therefore must be uncountable. Thus the image of $\alpha$ contains the uncountable family of disjoint open sets: $\{(\ell,t)|1/3<t<2/3\}$, $\ell\in(\ell_1,\ell_2)$, which is not possible.  It follows that the sets $\{\ell\}\times [0,1]$ are the path components of $X$ and that $\pi_0(X)\cong L$ where $q_X:X\to \pi_0(X)$ may be identified with the projection onto the first coordinate. Taking $L=[0,1]$ or $L=\mathbb{R}$ provides examples of non-metrizable $X$ for which $\pi_0(X)$ admits non-constant paths.
\end{example}

It is less clear how often $\pi_0(X)$ is totally path disconnected when $X$ is a metric space. A natural question one may then ask is: what spaces can be realized as path-component spaces? A beautiful and surprising result due to Douglas Harris shows that, in fact, every topological space is a path-component space in a functorial way.
\begin{theorem}[Harris \cite{Har80}] \label{essentiallysurjective}
For every space $X$, there is a paracompact Hausdorff space $H(X)$ and a natural homeomorphism $\piz(H(X))\cong X$. 
\end{theorem}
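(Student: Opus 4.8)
The plan is to construct, for an arbitrary topological space $X$, a paracompact Hausdorff space $H(X)$ whose path components are indexed naturally by the points of $X$, and then verify that the quotient topology on the set of path components recovers $X$ exactly. The guiding idea comes from the examples already worked out above: a single topologist's-sine-curve-like gadget $T$ has two path components, one of which is ``infinitely approached'' by the other, so that the quotient topology on $\pi_0(T)$ is non-discrete (Sierpinski). By wiring together many such gadgets — one for each pair (point $x$, basic open neighborhood $U$ of $x$) — we can force the quotient topology on the path components to contain exactly the open sets of $X$, no more and no fewer.

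\begin{proof}[Proof sketch]
Fix $X$ and a basis $\mathcal{B}$ for its topology. I would take the point set of $H(X)$ to consist of a copy of $X$ (a ``discrete layer'' of limit points, one path component per point of $X$) together with, for each basic open set $B\in\mathcal{B}$ and each point $x\in B$, an attached arc-like connector that is path-connected to the copy of $x$ but whose closure sweeps across all points of $B$ in the limit layer. Concretely, one models each connector on the sine-curve construction so that its single path component accumulates (topologically) precisely onto $q_X^{-1}(B)$, the analogue of the limit segment $P_1$ in Example~\ref{sinecurveexample}. One defines the topology on $H(X)$ by declaring a set open when its intersection with each connector is open in that connector and its trace on the limit layer is open in $X$; equivalently, one realizes $H(X)$ as a suitable subspace of a product or disjoint-sum-and-quotient of such gadgets, chosen so that the construction is functorial in $X$.

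The heart of the argument is the identification $\pi_0(H(X))\cong X$. First I would check that the path components of $H(X)$ are exactly the connectors-together-with-their-source-point and the isolated limit points that no connector reaches, so that there is a natural bijection $\pi_0(H(X))\leftrightarrow X$; this uses that each connector is path-connected and that distinct points of the limit layer cannot be joined by a path (the limit layer is totally path-disconnected by construction, as in the sine curve where $P_1$ admits no path to $P_2$). Second, and this is the crux, I would show that the quotient map $q_{H(X)}$ pulls back the quotient topology to exactly the topology of $X$: a set $A\subseteq X$ is open in $\pi_0(H(X))$ iff $q_{H(X)}^{-1}(A)$ is open in $H(X)$, and by the choice of connectors (each $B\in\mathcal{B}$ with $x\in B$ contributing a connector forcing $B$ to be saturated-open exactly when $A$ is open) this condition is equivalent to $A$ being open in $X$. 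Naturality in $X$ then follows by tracking how a continuous map $f:X\to X'$ induces a map on connectors.

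The main obstacle, and where the construction must be engineered carefully, is the second step: arranging that the quotient topology is \emph{neither too coarse nor too fine}. It is easy to make enough connectors that every open set of $X$ becomes open downstairs (fineness), but one must simultaneously ensure no \emph{extra} open sets appear, i.e.\ that a set whose preimage is open in $H(X)$ is genuinely open in $X$. This forces the accumulation behavior of each connector to be controlled precisely, so that a saturated open set in $H(X)$ must restrict to a union of basic opens upstairs. Securing this, together with the paracompactness and Hausdorffness of the total space $H(X)$ (which requires the connectors to be glued in a locally finite, separated manner), is the technical core of Harris's theorem.
\end{proof}
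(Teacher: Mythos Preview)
The paper does not prove this theorem at all: it is stated as a cited result of Harris \cite{Har80}, with no argument given beyond the attribution. So there is no ``paper's own proof'' to compare your proposal against; the authors simply quote the statement and move on to their own, different constructions for the Tychonoff case.

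As for your proposal itself, what you have written is a plan rather than a proof, and you acknowledge this openly: you identify the two genuinely hard steps --- (i) getting the quotient topology on $\pi_0(H(X))$ to match $X$ exactly, neither finer nor coarser, and (ii) arranging the gluing so that $H(X)$ is paracompact Hausdorff --- and then say that securing these ``is the technical core of Harris's theorem'' without carrying them out. That is not a gap in the sense of a wrong idea, but it means the proposal does not constitute a proof. One point that would need real care and that your sketch glosses over: $H(X)$ must be Hausdorff even when $X$ is not, so the ``limit layer'' cannot simply carry the subspace topology of $X$; yet the quotient map must still collapse to exactly the (possibly non-Hausdorff) topology of $X$. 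Your description oscillates between calling this layer ``discrete'' and saying open sets must have ``trace on the limit layer open in $X$,'' and these are in tension. Resolving that tension is precisely where the work lies, and your sketch does not do it.

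For what it is worth, the paper does remark that Harris's construction $H(X)$ is ``quite similar'' to the cobweb space $\varoast X$ of \cite{BVW}, so your sine-curve-connector intuition is in the right neighborhood; but the actual execution requires the details you have deferred.
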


An analogue for metric spaces was identified by Banakh, Vovk, and W\' ojcik. Their most general result relating to path-component spaces is the following. Recall that \textit{premetric} on set $X$ is a function $d:X\times X\to [0,\infty)$ such that $d(x,x)=0$ for all $x\in X$. Any premetric generates a topology on $X$ by basic open sets $B_r(x)=\{y\in X|d(y,x)<r\}$ where $r>0$. A \textit{premetric space} is a topological space whose topology is generated by a premetric on $X$.

\begin{theorem}[Banakh, Vovk, W\' ojcik \cite{BVW}] Every premetric space $X$ is the path-component space of a complete metric space $\varoast(X)$ called the \textit{cobweb space} of $X$.
\end{theorem}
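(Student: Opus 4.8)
The plan is to construct the cobweb space $\varoast(X)$ explicitly as a complete metric space built from the premetric $d$, and then to verify that its path components biject with the points of $X$ in a way that recovers the original topology. The guiding intuition comes from Example~\ref{sinecurveexample}: a single path component should be able to \emph{accumulate} on other path components exactly when the corresponding points of $X$ are close in the premetric, yet remain genuinely path-disconnected from them, so that distinct points of $X$ yield distinct path components. I would realize this by encoding a point of $\varoast(X)$ as a coherent infinite thread: a point $x\in X$ together with a descending sequence of scales $2^{-n}$ and, at each scale, a choice of nearby point witnessing how far one is allowed to drift. Concretely, $\varoast(X)$ would be the space of all such threads, metrized by a weighted sum $\rho$ in which the level-$n$ discrepancy is damped by $2^{-n}$, arranged so that changing the fine (deep) scale data is continuous and cheap, while a genuine horizontal jump between distinct points of $X$ can only be approached in the limit as the scale tends to $0$ and never completed along an honest path.

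First I would pin down the metric $\rho$ and verify it is genuine, using only that $d(x,x)=0$ and symmetrizing the definition to accommodate the possible asymmetry and failure of the triangle inequality for $d$. For completeness, the point is that the cobweb is by design a space of coherent infinite threads, and $\rho$ is weighted so that a $\rho$-Cauchy sequence of threads stabilizes coordinate by coordinate to a thread that again satisfies the coherence conditions. Crucially, no single coordinate is ever forced to converge to a new, possibly missing, point of $X$; it merely stabilizes to an existing one, which is exactly why completeness of $\varoast(X)$ requires no completeness hypothesis on $X$.

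Next I would identify the path components. The natural projection $p:\varoast(X)\to X$ sending a thread to its underlying point $x$ is continuous, and I would show that each fiber $p^{-1}(x)$ is path-connected by contracting the auxiliary scale data continuously, so that each path component maps into a single point. The converse, that threads over distinct $x\ne y$ cannot be joined by a path, is where the scale mechanism does its work: any such path would have to effect a horizontal jump, which by construction requires driving the scale parameter to $0$, and the continuity of $\rho$ forbids completing that jump in finite time. Hence $p$ induces a bijection $\pi_0(\varoast(X))\to X$.

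The main obstacle, and the heart of the proof, is matching the quotient topology on $\pi_0(\varoast(X))$ with the given premetric topology on $X$, that is, showing that a set $U\subseteq X$ is open if and only if $p^{-1}(U)$ is open in $\varoast(X)$. The forward direction is immediate from continuity of $p$; the reverse requires showing that whenever $x$ lies in the premetric-interior of $U$, every thread over $x$ has a $\rho$-neighborhood contained in $p^{-1}(U)$, which amounts to a careful matching of a ball $B_r(x)$ of $X$ against the admissible horizontal drift recorded at a sufficiently deep scale. This is the step in which the weights $2^{-n}$, the precise closeness conditions at each level, and the possibly asymmetric balls of $d$ must all be balanced simultaneously, and I expect it to absorb the bulk of the technical work.
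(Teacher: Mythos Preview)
The paper does not actually prove this theorem: it is stated as a result of Banakh, Vovk, and W\'ojcik and attributed entirely to the cited reference~\cite{BVW}, with no argument given in the present paper. There is therefore no ``paper's own proof'' to compare your proposal against; the statement functions here purely as background motivating Theorem~\ref{mainthm}.

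As for your sketch on its own terms: the broad strategy---build a complete metric space fibered over $X$ whose fibers are path-connected, arranged so that distinct fibers cannot be joined by a path, and then check that the induced bijection $\pi_0(\varoast(X))\to X$ is a homeomorphism---is the right shape for such an argument. However, your description of a point of $\varoast(X)$ as an ``infinite thread'' of coherent choices at all scales is left entirely informal, and the actual cobweb construction in~\cite{BVW} is more concrete and rather different in flavor: points of $\varoast(X)$ are not infinite sequences but single triples encoding a point of $X$, a scale parameter, and a ``target'' point reachable at that scale, glued along a common spine. Your completeness argument, as written, relies on the claim that a Cauchy sequence of infinite threads ``stabilizes coordinate by coordinate''; this would need the coordinates to be discrete-valued, which they are not (they are points of $X$), so that step as stated does not go through. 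If you want to reconstruct the proof, you should consult~\cite{BVW} directly for the precise definition of $\varoast(X)$ rather than guessing at it.
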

\begin{corollary}
Every metric space is the path-component space of a complete metric space.
\end{corollary}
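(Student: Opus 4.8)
The plan is to deduce the corollary directly from the preceding theorem of Banakh, Vovk, and Wójcik by observing that the class of metric spaces is contained in the class of premetric spaces. Recall that a premetric on a set $X$ is required only to satisfy $d(x,x)=0$ for every $x\in X$. Every metric trivially meets this single condition, since a metric additionally enjoys symmetry, the triangle inequality, and the implication $d(x,y)=0\Rightarrow x=y$; in particular the defining premetric axiom $d(x,x)=0$ holds. Thus, as a function, every metric is a premetric.

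The one point worth checking is that the two topologies agree. When a metric $d$ is regarded as a premetric, the topology it generates has basic open sets $B_r(x)=\{y\in X\mid d(y,x)<r\}$ for $r>0$, and these are precisely the open balls of the usual metric topology. Hence the premetric topology coincides on the nose with the metric topology, and any metric space $X$ is, in particular, a premetric space in the sense of the definition above.

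Applying the theorem to the premetric space $X$ then furnishes a complete metric space $\varoast(X)$ together with a homeomorphism $\pi_0(\varoast(X))\cong X$, which is exactly the assertion of the corollary. I expect no genuine obstacle here: the whole content of the corollary is the containment of metric spaces within premetric spaces, and the only thing requiring verification is the agreement of the generated topologies, which is immediate since the basic open balls are literally the same sets in both cases.
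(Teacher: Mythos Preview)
Your proposal is correct and matches the paper's approach: the paper states this corollary without proof, immediately after the Banakh--Vovk--W\'ojcik theorem, so the intended argument is exactly the observation you make, that every metric is in particular a premetric generating the same topology.
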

\section{Realizing the unit interval as a path-component space}
The closed unit interval may be realized as the quotient of any infinite connected compact Hausdorff space $X$ using any Uryshon function $f:X\to [0,1]$ where $f(a)=0$ and $f(b)=1$ for $a\neq b$. We take a more specific approach here since our goal in this section is to realize the unit interval naturally as the path-component space of a space $K$ where we wish for $K$ to be as ``nice" as possible.

\begin{remark}\label{shrinkingremark}
We first recall an elementary fact from decomposition theory \cite{Daverman}. An onto map $f:X\to Y$ is said to be \textit{monotone} if the preimage $f^{-1}(y)$ is compact and connected for every $y\in Y$. It is well-known that if an onto map $q:[0,1]\to Z$ is monotone, then $Z\cong [0,1]$. 
\end{remark}
Let $C=\Big\{\sum_{i=1}^{\infty}\frac{x_i}{3^i}\Big| x_i\in \{0,2\} \Big\}$ be the standard middle third Cantor set in $[0,1]$. Let $\mathcal{C}$ be the set of open intervals which are the connected components of $[0,1]\backslash C$. For each $k\geq 1$, let $\mathcal{C}_{k}\subseteq \mathcal{C}$ be the set of connected components of $[0,1]\backslash C$ which are open intervals of length $\frac{1}{3^k}$.

Let $K_{odd}=\bigcup_{k\text{ odd}}\mathcal{C}_k$ and $K_{even}=\bigcup_{k\text{ even}}\mathcal{C}_k$. Define \[K=(C\times [0,1])\cup (K_{even}\times \{0\})\cup (K_{odd}\times \{1\})\] (See Figure \ref{thespacek}).
\begin{figure}[H]
\centering \includegraphics[height=1.8in]{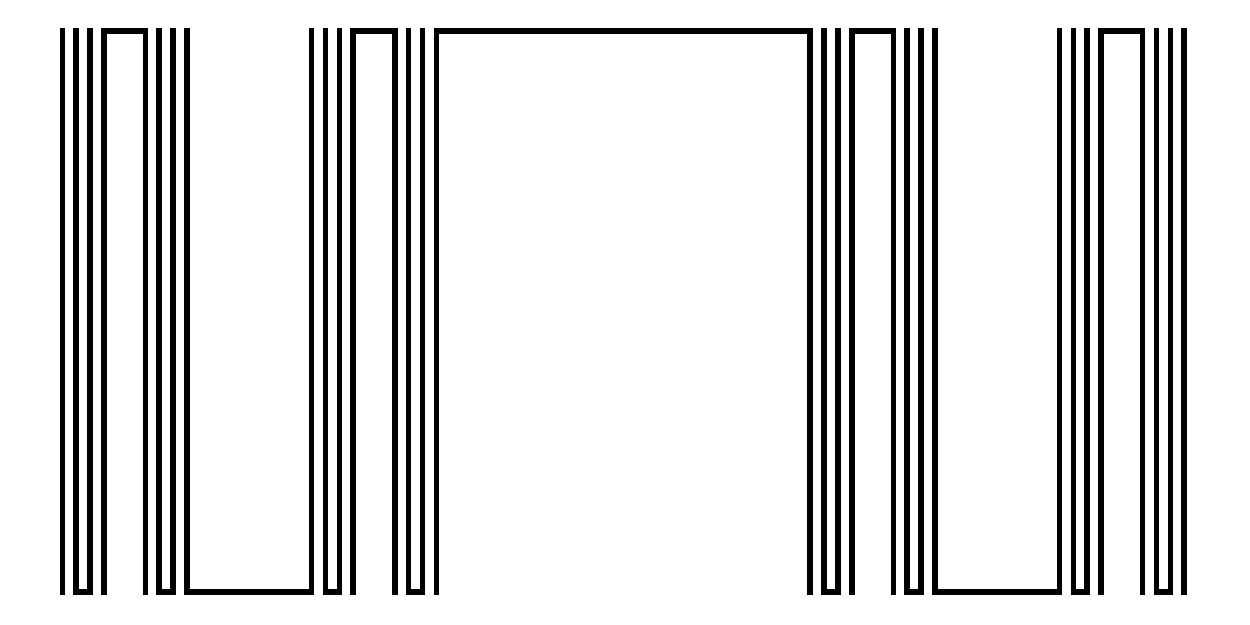}
\caption{\label{thespacek}The space $K$.}
\end{figure}
Alternatively, let $K_0=[0,1]^2$. For each $n\geq 1$ the set \[U_{n}=\bigcup_{\txt{\scriptsize$1\leq j\leq n$\\  \scriptsize $j$ odd}}\bigcup_{J\in \mathcal{C}_{j}}J\times [0,1)\cup \bigcup_{\txt{\scriptsize$1\leq j\leq n$\\  \scriptsize $j$ even}}\bigcup_{J\in \mathcal{C}_{j}}J\times (0,1]\] is open in $[0,1]^2$ and $K_n=[0,1]^2\backslash U_n$ is a 2-dimensional, contractible, compact CW-complex. Therefore, $K=\bigcap_{n\in\mathbb{N}}K_n$ is a compact metric space shape equivalent to a point.

\begin{definition}
We say a map $f:X\to Y$ is \textit{perfect} if $X$ is Hausdorff, $f$ is a closed map, and the point-preimage $f^{-1}(y)$ is compact for each $y\in Y$.
\end{definition}

\begin{theorem}\label{Kpcs}
$\pi_0(K)\cong [0,1]$ where the quotient map $q_K:K\to \pi_0(K)$ is a perfect map.
\end{theorem}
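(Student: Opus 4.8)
The plan is to exhibit an explicit perfect map $q\colon K\to[0,1]$ whose fibers are exactly the path components, and then to deduce everything from the interplay of the two quotient maps $q$ and $q_K$. Let $\phi\colon[0,1]\to[0,1]$ be the Cantor (devil's staircase) function, which is continuous, monotone, onto, constant on each component of $[0,1]\setminus C$, and whose fibers are the single points of $C$ that are not gap endpoints together with the closed gaps $\overline J=[\ell_J,r_J]$ for $J\in\mathcal C$. I would set $q=\phi\circ\operatorname{pr}_1\colon K\to[0,1]$, where $\operatorname{pr}_1$ is the projection to the first coordinate. From the description $K=\bigcap_n K_n$ the space $K$ is compact, so $q$ is a continuous surjection from a compact space to a Hausdorff space and is therefore closed, hence a quotient map. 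A direct inspection of $K$ shows that the fiber $q^{-1}(y)$ is a single vertical segment $\{c\}\times[0,1]$ when $y$ is not dyadic (so $\phi^{-1}(y)=\{c\}$ with $c\in C$ not a gap endpoint), and is the ``staple'' $(\{\ell_J\}\cup\{r_J\})\times[0,1]\ \cup\ \overline J\times\{\varepsilon_J\}$ when $y$ is dyadic (so $\phi^{-1}(y)=\overline J$), where $\varepsilon_J=1$ if $J$ is odd-level and $\varepsilon_J=0$ if even-level. In either case $q^{-1}(y)$ is path-connected and compact, so each fiber lies inside a single path component; thus it remains only to prove the reverse inclusion, that no path leaves a fiber. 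Granting this, $q$ and $q_K$ are quotient maps with identical fibers, so $q$ induces a homeomorphism $\pi_0(K)\cong[0,1]$, and $q_K$ inherits from $q$ the properties of being closed with compact point-preimages, i.e. perfect.

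The heart of the matter, and the step I expect to be the main obstacle, is the claim that for every path $\gamma\colon[0,1]\to K$ the composite $g=\phi\circ\operatorname{pr}_1\circ\gamma$ is constant. Writing $x=\operatorname{pr}_1\circ\gamma$ and $h=\operatorname{pr}_2\circ\gamma$ for the (continuous) coordinate functions of $\gamma$, the key geometric feature is that $K$ lies at height $1$ over every odd-level gap and at height $0$ over every even-level gap, while both the odd-level gaps and the even-level gaps are dense in $[0,1]$. Intuitively, a path that changed the value of $\phi$ would have to sweep $x$ across a genuinely Cantor-like portion of $[0,1]$, and over that portion the constraint on $h$ would force it to oscillate between $0$ and $1$ on arbitrarily small parameter intervals, which no continuous $h$ can do.

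To make this precise I would argue by contradiction. If $g$ is nonconstant its image is a nondegenerate interval, so I may pick a non-dyadic value $w$ in its interior; then $\phi^{-1}(w)=\{c\}$ for a Cantor point $c$ which, being non-dyadic, is not a gap endpoint and hence is a two-sided accumulation point of $C$. Choosing $a$ in the boundary of the closed set $g^{-1}(w)$, continuity gives $x(a)=c$ together with parameters $t_n\to a$, lying on one side of $a$, with $x(t_n)\to c$ and $x(t_n)\ne c$; the intermediate value theorem then shows that $x$ attains every value of a nondegenerate interval $I_n$ with endpoint $c$ on the compact parameter interval between $a$ and $t_n$. If $h(a)<1$, continuity of $h$ keeps $h<1$ on that interval for large $n$, so $x$ must avoid all odd-level gaps there; but $I_n$ necessarily contains an odd-level gap (these are dense and $c$ is a two-sided accumulation point of $C$), and each point of that gap is a value of $x$ at which $h$ would be forced to equal $1$, a contradiction. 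If instead $h(a)>0$ the symmetric argument with even-level gaps applies. Since $h(a)$ cannot be simultaneously $0$ and $1$, one of the two cases always occurs, proving the claim. With the claim in hand the identification $\pi_0(K)\cong[0,1]$ and the perfectness of $q_K$ follow as described; alternatively, once the path components are known, one may reach $\pi_0(K)\cong[0,1]$ by observing that collapsing each closed gap $\overline J$ to a point realizes the quotient of $[0,1]$ by a map with compact connected fibers and invoking Remark \ref{shrinkingremark}.
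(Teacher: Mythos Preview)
Your argument is correct and essentially parallels the paper's: both identify the fibers of a natural map $K\to[0,1]$ with the path components and then use compactness to conclude that $q_K$ is perfect and $\pi_0(K)\cong[0,1]$. Where the paper invokes Remark~\ref{shrinkingremark} on an abstract monotone quotient collapsing each closed gap $\overline I$, you name that quotient explicitly as the Cantor staircase $\phi$; and where the paper's oscillation argument constructs an infinite interleaved sequence $s_1<t_1<s_2<\dots$ of parameters at which the second coordinate is forced to alternate between $1$ and $0$ and then takes its supremum, you instead localize at a single boundary point of $g^{-1}(w)$ for a non-dyadic $w$ and derive the contradiction from one side. These are different packagings of the same geometric idea that odd- and even-level gaps both accumulate at every non-endpoint of $C$. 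One small wording issue: your parenthetical justification ``these are dense'' is not quite right as stated, since $K_{odd}$ is not dense in $[0,1]$ (its closure misses the interiors of the even-level gaps); what you actually need, and what does hold, is that every one-sided neighborhood of a non-endpoint $c\in C$ meets $K_{odd}$ (and likewise $K_{even}$), which is enough for your argument.
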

\begin{proof}
First, we identify the path components of $K$. Let $\pi:K\to [0,1]$ be the projection onto the first coordinate. Let $E\subseteq C$ be the set containing the endpoints of the intervals $I\in\mathcal{C}$ and set $D=C\backslash E$. For $c\in D$, $Y_c=\pi^{-1}(c)=\{c\}\times [0,1]$ is path-connected and for each $I\in\mathcal{C}$, $Y_I=\pi^{-1}\left(\overline{I}\right)\cong [0,1]$ is path-connected. For convenience, we say $I\in \mathcal{C}$ is of \textit{odd-type} if $I\subseteq K_{odd}$ and of \textit{even-type} if $I\subseteq K_{even}$. For any two distinct elements of $\{Y_c|c\in D\}\cup \{Y_I|I\in\mathcal{C}\}$, there are infinitely many intervals $I\in\mathcal{C}$ of odd-type and of even-type lying between them (as subsets of $[0,1]$).

If $A,B$ are connected sets in $[0,1]$ and $a<b$ for all $a\in A$ and $b\in B$, we write $A<B$. Suppose $\alpha:[0,1]\to K$ where $\alpha(0)$ and $\alpha(1)$ lie respectively within distinct elements $A$ and $B$ of $\{Y_c|c\in D\}\cup \{Y_I|I\in\mathcal{C}\}$. Without loss of generality, suppose $A<B$.

Inductively applying the intermediate value theorem, we may find an increasing sequence $0<s_1<t_1<s_2<t_2<...<1$ and sets $A<I_1<J_1<I_2<J_2<...<B$ where $I_n\in \mathcal{C}$ is of odd-type, $J_n\in\mathcal{C}$ of even-type,  $\pi\circ \alpha(s_n)=a_n$ is the midpoint of $I_n$, and $\pi\circ \alpha(t_n)=b_n$ is the midpoint of $J_n$. Note $\alpha(s_n)=(a_n,1)$ and $\alpha(t_n)=(b_n,0)$. If $x=\sup\{s_n|n\geq 1\}=\sup \{t_n|n\geq 1\}$, then by continuity of $\alpha$, we have $\alpha(x)=(\lim_{n\to\infty}a_n,1)$ in $[0,1]\times \{1\}$ and $\alpha(x)=(\lim_{n\to\infty}  b_n,0)$ in $[0,1]\times \{0\}$; a contradiction. We conclude that $\pi_0(K)=\{Y_c|c\in D\}\cup \{Y_I|I\in\mathcal{C}\}$ as a set.

Consider the map $f:[0,1]\to Z$ which, for each $I\in\mathcal{C}$, collapses $\overline{I}$ to a point. Certainly, $f$ is monotone so $Z\cong [0,1]$ by Remark \ref{shrinkingremark}. The fibers of $f\circ \pi$ are exactly the fibers of the quotient map $q_K:K\to \pi_0(K)$ so there is a unique continuous bijection $g:\pi_0(K)\to Z$ such that $g\circ q_K=f\circ \pi$. Since $K$ is compact, so is its quotient $\pi_0(K)$. Since $Z$ is Hausdorff, $g$ is a closed map. We conclude that $g$ is a homeomorphism and thus $\pi_0(K)\cong Z\cong [0,1]$.
\[\xymatrix{
K \ar[d]_{q_K} \ar[r]^-{\pi} & [0,1] \ar[d]^{f}\\
\pi_0(K) \ar@{->}[r]_-{g} & Z
}\]
Since $K$ is compact and $\pi_0(K)$ is Hausdorff, $q_K$ is a perfect map.
\end{proof}
\begin{example}
The space $K$ can also be used to construct reasonably nice spaces whose path-component spaces satisfy only weak separation axioms. The space $K'=(K\times [0,1])\backslash (\{1\}\times[0,1]\times (0,1))\subset \mathbb{R}^3$ is not compact but is a separable metric space. The path components of $K'$ are precisely the sets:
\begin{enumerate}
\item $C\times [0,1]$ where $C\subset [0,1)\times [0,1]$ is a path component of $K$,
\item $C_{0}=\{1\}\times [0,1]\times \{0\}$,
\item $C_{1}=\{1\}\times [0,1]\times \{1\}$.
\end{enumerate}
Based on Theorem \ref{Kpcs}, it follows that $\pi_0(K')\cong [0,1]\times \{0,1\}/\mathord{\sim}$ where $(t,0)\sim (t,1)$ if $t<1$. Hence, $\pi_0(K')$ is the $T_1$ but non-Hausdorff ``closed unit interval with two copies of $1$." 
\end{example}
\section{Realizing Tychonoff spaces as path-component spaces}
This section is dedicated to proving Theorem \ref{mainthm} as well as a functorial version of the construction.
\begin{proposition} \label{productsofpcs}
Let $\{X_{\lambda}\}$ be a family of spaces and $X=\prod_{\lambda} X_{\lambda}$. Let $q_{\lambda}:X_{\lambda}\to \pi_{0}(X_{\lambda})$ and $q_X:X\to \pizx$ be the canonical quotient maps and $\prod_{\lambda}q_{\lambda}:X\to \prod_{\lambda} \pi_{0}(X_{\lambda})$ be the product map. There is a natural continuous bijection $\phi:\pi_{0}\left(X\right)\to \prod_{\lambda}\pi_{0}(X_{\lambda})$ such that $\phi\circ q_X=\prod_{\lambda}q_{\lambda}$, which is a homeomorphism if and only if the product map $\prod_{\lambda}q_{\lambda}:\prod_{\lambda} X_{\lambda}\to \prod_{\lambda} \pi_{0}(X_{\lambda})$ is a quotient map.
\end{proposition}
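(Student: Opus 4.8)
The plan is to first establish the underlying set-level bijection using the standard description of path components of a product, then read off continuity and the factorization $\phi\circ q_X=\prod_\lambda q_\lambda$ from the universal property of the quotient map $q_X$, and finally deduce the homeomorphism criterion from a general fact about quotient maps.

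First I would identify the path components of $X=\prod_\lambda X_\lambda$. The claim is that two points $(x_\lambda)$ and $(y_\lambda)$ lie in the same path component of $X$ if and only if $x_\lambda$ and $y_\lambda$ lie in the same path component of $X_\lambda$ for every $\lambda$. The forward implication follows by composing a path $\alpha:[0,1]\to X$ with the coordinate projections $\pi_\lambda:X\to X_\lambda$, which are continuous. For the converse, given paths $\alpha_\lambda:[0,1]\to X_\lambda$ from $x_\lambda$ to $y_\lambda$, the map $t\mapsto(\alpha_\lambda(t))_\lambda$ is continuous into the product topology precisely because each coordinate is continuous (the universal property of the product), and it is a path from $(x_\lambda)$ to $(y_\lambda)$. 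Hence the path components of $X$ are exactly the sets $\prod_\lambda P_\lambda$ with each $P_\lambda$ a path component of $X_\lambda$, so $\phi([(x_\lambda)])=([x_\lambda])_\lambda$ is a well-defined bijection onto $\prod_\lambda\pi_0(X_\lambda)$, and a direct computation gives $\phi\circ q_X=\prod_\lambda q_\lambda$.

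Continuity and naturality then come for free. Since each $q_\lambda$ is continuous, so is $\prod_\lambda q_\lambda=\phi\circ q_X$; as $q_X$ is a quotient map, the universal property of quotient maps yields continuity of $\phi$. Naturality in the family $\{X_\lambda\}$, i.e. compatibility of $\phi$ with the maps induced by any family $f_\lambda:X_\lambda\to Y_\lambda$, follows from functoriality of $\pi_0$ together with the defining factorization, and is a routine diagram chase.

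For the homeomorphism criterion, I note that a continuous bijection is a homeomorphism if and only if it is a quotient map. If $\phi$ is a homeomorphism, then $\prod_\lambda q_\lambda=\phi\circ q_X$ is the composite of a quotient map with a homeomorphism, hence a quotient map. Conversely, suppose $\prod_\lambda q_\lambda$ is a quotient map, and invoke the standard fact that if $g\circ f$ is a quotient map with $f$ continuous and surjective, then $g$ is a quotient map: indeed, if $g^{-1}(U)$ is open then $(g\circ f)^{-1}(U)=f^{-1}(g^{-1}(U))$ is open, forcing $U$ open. Applying this with $f=q_X$ and $g=\phi$ shows $\phi$ is a quotient map, hence a homeomorphism. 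The only genuinely substantive step is the first—the identification of the path components of an arbitrary product with products of path components—while everything else is formal manipulation of quotient maps.
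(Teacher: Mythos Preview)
Your proof is correct and follows essentially the same approach as the paper. The paper defines $\phi$ via the functorially induced maps $(r_\lambda)_0:\pi_0(X)\to\pi_0(X_\lambda)$ and then checks injectivity and surjectivity, whereas you first identify the path components of the product and then read off the bijection; the substantive content (projecting paths to coordinates and assembling coordinate paths into a product path) is identical, and you are somewhat more explicit than the paper in spelling out both directions of the homeomorphism criterion.
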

\begin{proof}
The projections $r_{\lambda}:X\to X_{\lambda}$ induces maps $(r_{\lambda})_{0}:\pi_{0}\left(X\right)\to \pi_{0}(X_{\lambda})$ which in turn induce the natural map $\phi:\pi_{0}\left(X\right)\to \prod_{\lambda}\pi_{0}(X_{\lambda})$, $\phi([(x_\lambda)])=([x_{\lambda}])$. Clearly $\phi$ is surjective. If $[x_{\lambda}]=[y_{\lambda}]$ for each $\lambda$, then there is a path $\alpha_{\lambda}:[0,1]\to X_{\lambda}$ from $x_{\lambda}$ to $y_{\lambda}$. These maps induce a path $\alpha:[0,1]\to X$ such that $r_{\lambda}\circ \alpha=\alpha_{\lambda}$. Since $\alpha$ is a path from $(x_\lambda)$ to $(y_\lambda)$, we have $[(x_\lambda)]=[(y_\lambda)]$. Thus $\phi$ is injective.
\[\xymatrix{
& \prod_{\lambda} X_{\lambda} \ar[dl]_{q_X} \ar[dr]^{\prod_{\lambda} q_{\lambda}} \\
\pi_0\left(\prod_{\lambda} X_{\lambda}\right) \ar[rr]_{\phi} & & \prod_{\lambda}\pi_{0}(X_{\lambda})
}\]
The last fact follows directly from the fact that $q_X$ is a quotient map, $\phi$ is a bijection, and the commutativity of the triangle above.
\end{proof}
\begin{corollary}\label{compactpcs}
If, for all $\lambda$, $X_{\lambda}$ is compact and $\pi_0(X_{\lambda})$ is Hausdorff, then $\phi:\pi_{0}\left(\prod_{\lambda}X_{\lambda}\right)\to \prod_{\lambda}\pi_{0}(X_{\lambda})$ is a homeomorphism.
\end{corollary}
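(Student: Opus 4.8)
The plan is to reduce everything to the criterion already established in Proposition~\ref{productsofpcs}, which says that the continuous bijection $\phi$ is a homeomorphism if and only if the product map $\prod_{\lambda}q_{\lambda}:\prod_{\lambda}X_{\lambda}\to\prod_{\lambda}\pi_0(X_{\lambda})$ is a quotient map. Thus the whole task collapses to checking that this one map is quotient, and I would establish this by showing it is a continuous closed surjection, invoking the standard fact that every continuous closed surjection is a quotient map.

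To run this argument I would record three observations. First, the source $\prod_{\lambda}X_{\lambda}$ is compact by Tychonoff's theorem, since each factor $X_{\lambda}$ is compact. Second, the target $\prod_{\lambda}\pi_0(X_{\lambda})$ is Hausdorff, being a product of the Hausdorff spaces $\pi_0(X_{\lambda})$. Third, $\prod_{\lambda}q_{\lambda}$ is surjective because each quotient map $q_{\lambda}$ is onto and a product of surjections is a surjection. Given these, closedness is automatic: any closed subset of $\prod_{\lambda}X_{\lambda}$ is compact, its image under the continuous map $\prod_{\lambda}q_{\lambda}$ is compact, and a compact subset of the Hausdorff space $\prod_{\lambda}\pi_0(X_{\lambda})$ is closed. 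Hence $\prod_{\lambda}q_{\lambda}$ is a closed continuous surjection, therefore a quotient map, and Proposition~\ref{productsofpcs} delivers the conclusion.

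There is no genuine obstacle here; the content lies in choosing the right route. The point worth emphasizing is that passing through compactness of the full product lets us sidestep the delicate question of whether an infinite product of quotient (or even perfect) maps is again a quotient map, which can fail without extra hypotheses. The two assumptions are each used exactly once and neither can be dropped: compactness of the factors feeds Tychonoff's theorem to make the source compact, while Hausdorffness of the factors $\pi_0(X_{\lambda})$ is precisely what makes the compact-to-Hausdorff closedness argument available in the target.
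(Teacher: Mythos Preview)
Your proposal is correct and follows essentially the same route as the paper: use Tychonoff to get compactness of the product, Hausdorffness of the target product, conclude the product map is a closed surjection and hence quotient, and then invoke Proposition~\ref{productsofpcs}. The paper's proof is simply a terser version of exactly this argument.
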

\begin{proof}
Given the assumptions, $\prod_{\lambda}X_{\lambda}$ is compact by the Tychonoff Theorem and $\prod_{\lambda}\pi_{0}(X_{\lambda})$ is Hausdorff. Thus $\prod_{\lambda} q_{\lambda}$ is a closed surjection and therefore quotient and we may apply the final statement of Proposition \ref{productsofpcs}.
\end{proof}
\begin{remark}\label{heredquotient}
Certainly every onto perfect map is quotient. Moreover, a perfect map $f:X\to Y$ has the property that if $S\subseteq Y$, then the restriction $f|_{f^{-1}(S)}:f^{-1}(S)\to S$ is a perfect map \cite[Proposition 3.7.6]{Eng89}.
\end{remark}
\begin{lemma}\label{basicprop}
Suppose $A$ is a compact Hausdorff space such that $\pi_0(A)$ is Hausdorff and $X\subseteq \pi_0(A)$. If $B=q_{A}^{-1}(X)\subseteq A$, then there is a canonical homeomorphism $\pi_0(B)\cong X$ and $q_B:B\to \pi_0(B)=X$ is a perfect map.
\end{lemma}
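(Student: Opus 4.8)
The plan is to use the fact that $q_A : A \to \pi_0(A)$ is itself a perfect map (since $A$ is compact Hausdorff and $\pi_0(A)$ is Hausdorff, $q_A$ is a closed surjection with point-preimages that are closed subsets of a compact space, hence compact) and then invoke the hereditary property of perfect maps recorded in Remark \ref{heredquotient}. Concretely, I would first observe that $q_A$ is perfect, then restrict over the subspace $X \subseteq \pi_0(A)$: by \cite[Proposition 3.7.6]{Eng89} the restriction $q_A|_{q_A^{-1}(X)} : q_A^{-1}(X) \to X$, i.e. $q_A|_B : B \to X$, is again a perfect map.

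**The key identification** that remains is to show that this restricted perfect map $q_A|_B$ is, up to a canonical homeomorphism, the path-component quotient map $q_B : B \to \pi_0(B)$. For this I would check that the path components of $B$ coincide with the fibers of $q_A|_B$. Since $B = q_A^{-1}(X)$ is saturated, it is a union of whole path components of $A$, and the fibers of $q_A|_B$ are exactly the fibers $q_A^{-1}([a])$ for $[a] \in X$, which are the path components of $A$ meeting $B$. The one point needing care is that a path component of $A$ lying inside $B$ is still a path component of $B$ (not merely contained in a larger one): this holds because $B$ contains it entirely and any path in $B$ is a path in $A$, so no two distinct $A$-path-components inside $B$ can be joined. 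Hence the fibers of $q_A|_B$ are precisely the path components of $B$, so $q_A|_B$ factors through $q_B$ via a continuous bijection $h : \pi_0(B) \to X$.

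**To upgrade** $h$ to a homeomorphism I would argue that $h$ is both continuous and a quotient-compatible map in both directions. Since $q_A|_B$ is perfect, hence a quotient map onto $X$, and its fibers are exactly the fibers of $q_B$, the two quotient maps $q_B$ and $q_A|_B$ induce the identical equivalence relation on $B$; a standard argument then gives that the canonical bijection $h$ between the two quotient spaces is a homeomorphism, so $\pi_0(B) \cong X$. Under this identification $q_B$ is carried to $q_A|_B$, which we already know is perfect, completing the proof.

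**The main obstacle** I anticipate is the fiber-matching step: one must be sure that collapsing $A$ to $\pi_0(A)$ and then restricting to $X$ genuinely yields the \emph{path-component} decomposition of $B$ and not a coarser one. The saturation of $B$ is what makes this work, but it deserves an explicit check, since in general restricting a quotient map to a non-saturated subset need not reproduce the subspace quotient. Everything else reduces to the already-established perfectness of $q_A$ and the citation from \cite{Eng89}.
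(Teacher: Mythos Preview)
Your proposal is correct and follows essentially the same approach as the paper: show $q_A$ is perfect, restrict it to $B$ via Remark \ref{heredquotient}, and then identify the restricted map with $q_B$ using that both are quotient maps with identical fibers. You are somewhat more explicit than the paper about the fiber-matching step (the saturation argument), which the paper handles in a single phrase (``makes the same identifications''); otherwise the arguments coincide.
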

\begin{proof}
Since $A$ and $\pi_0(A)$ are compact Hausdorff, the quotient map $q_A:A\to \pi_0(A)$ is a perfect map. According to Remark \ref{heredquotient}, the restriction of $q_A$ to $B$ is a perfect map $q_{A}|_{B}:B\to X$. Since onto perfect maps are quotient, $q_{A}|_{B}$ is a quotient map which makes the same identifications as $q:B\to \pi_0(B)$. Thus there is a canonical homeomorphism $\pi_0(B)\cong X$.
\end{proof}
Recall that the \textit{weight} $w(X)$ of a topological space $X$ is the minimal cardinality of a basis generating the topology of $X$.\\\\
\noindent \textit{Proof of Theorem \ref{mainthm}.} Suppose $X$ is a Tychonoff space of weight $\mathbf{m}=w(X)$. Recall that for any cardinal $\mathbf{m}\geq \aleph_0$, the direct product $[0,1]^{\mathbf{m}}$ (of weight $\mathbf{m}$) is universal for all Tychonoff spaces $X$ of weight $\mathbf{m}$ \cite[2.3.23]{Eng89}, i.e. $X$ homeomorphically embeds as a subspace of $[0,1]^{\mathbf{m}}$. Hence, we may identify $X$ as a subspace of $[0,1]^{\mathbf{m}}$. Recalling the compact space $K\subset [0,1]^2$ constructed in the previous section, we may identify $\pi_0(K)=[0,1]$ by Theorem \ref{Kpcs}. By Corollary \ref{compactpcs}, the product map $Q=(q_{K})^{\mathbf{m}}:K^{\mathbf{m}}\to [0,1]^{\mathbf{m}}$ is a quotient map whose fibers are the path-components of $K^{\mathbf{m}}$ and the canonical bijection $\phi:\pi_0\left(K^{\mathbf{m}}\right)\to [0,1]^{\mathbf{m}}$ such that $\phi\circ q_{K^{\mathbf{m}}}=Q$ is a homeomorphism. Moreover, the Hausdorff spaces $K^{\mathbf{m}}$ and $[0,1]^{\mathbf{m}}$ are compact by the Tychonoff Theorem. Thus $Q$ is a closed map and hence a perfect map.
\[\xymatrix{
& K^{\mathbf{m}} \ar[dl]_{q_{K^{\mathbf{m}}}} \ar[dr]^{Q} \\
\pi_0\left(K^{\mathbf{m}}\right) \ar[rr]_{\phi} & & [0,1]^{\mathbf{m}}
}\]
Set $Y=Q^{-1}(X)\subseteq K^{\mathbf{m}}$. Note that since $K$ is second countable, $w(Y)\leq w(K^{\mathbf{m}})=\mathbf{m}=w(X)$. According to Remark \ref{heredquotient}, the restriction $Q|_{Y}:Y\to X$ of $Q$ is an onto perfect map. In general, If $f:Y\to X$ is an onto perfect map, then $w(X)\leq w(Y)$ \cite[3.7.19]{Eng89}. Hence $w(Y)=w(X)$. Since $\phi\circ q_{K^{\mathbf{m}}}=Q$, the quotient map $q_Y:Y\to \pi_0(Y)$ makes the same identifications as $Q|_{Y}$ and hence, $\phi$ restricts to a homeomorphism $\phi|_{\pi_0(Y)}:\pi_0(Y)\cong X$. Since $\phi|_{\pi_0(Y)}\circ q_Y=Q|_{Y}$ where $\phi|_{\pi_0(Y)}$ is a homeomorphism and $Q|_{Y}$ is perfect, $q_Y$ is also a perfect map. \hfill $\square$\\\\

The construction of $Y$ in the proof of Theorem \ref{mainthm} is not functorial since a choice of basis for the topology of $X$ is required for the embedding $X\subseteq [0,1]^{\mathbf{m}}$. We now show that if one is willing to give up the equality $w(Y)=w(X)$, the construction can be made functorial.
\begin{theorem}\label{functorthm}
Let $\mathbf{Tych}\subset \spaces$ be the full subcategory of Tychonoff spaces. Then there is a functor $\Phi:\mathbf{Tych}\to \mathbf{Tych}$ and a natural isomorphism $\pi_0\circ \Phi\cong Id_{\mathbf{Tych}}$ such that the natural maps $q_{\Phi(X)}:\Phi(X)\to \pi_0(\Phi(X))$ are onto perfect maps.
\end{theorem}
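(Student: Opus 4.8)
The plan is to carry out the construction of Theorem \ref{mainthm} relative to the \emph{canonical} embedding of a Tychonoff space into a Tychonoff cube, which is functorial, in place of an embedding depending on a choice of basis. For a Tychonoff space $X$, write $C(X)=C(X,[0,1])$ for the set of continuous maps $X\to[0,1]$ and let $e_X:X\to[0,1]^{C(X)}$ be the evaluation map $e_X(x)=(h(x))_{h\in C(X)}$. Because $X$ is Tychonoff, $e_X$ is a homeomorphic embedding. This assignment is functorial: a map $g:X\to X'$ induces $g^{\ast}:C(X')\to C(X)$, $h\mapsto h\circ g$, and hence a map $T(g):[0,1]^{C(X)}\to[0,1]^{C(X')}$ sending $\xi$ to $h\mapsto \xi(h\circ g)$. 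A direct check gives $T(g)\circ e_X=e_{X'}\circ g$, so that $T(X)=[0,1]^{C(X)}$ defines a covariant functor $\mathbf{Tych}\to\mathbf{Top}$ for which $e:\mathrm{Id}\to T$ is a natural transformation.

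First I would replace the cube $[0,1]^{C(X)}$ by the compact space $\mathbf{K}(X)=K^{C(X)}$, using the space $K$ of Section 3. Since $K$ is compact and $\pi_0(K)\cong[0,1]$ is Hausdorff (Theorem \ref{Kpcs}), Corollary \ref{compactpcs} supplies a homeomorphism $\phi_X:\pi_0(\mathbf{K}(X))\cong T(X)$ satisfying $\phi_X\circ q_{\mathbf{K}(X)}=Q_X$, where $Q_X=(q_K)^{C(X)}:\mathbf{K}(X)\to T(X)$ is the coordinatewise map. As $\mathbf{K}(X)=K^{C(X)}$ and $T(X)$ are compact Hausdorff, the continuous surjection $Q_X$ is closed with compact fibers, hence perfect. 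Exactly as for $T$, the construction $\mathbf{K}$ is a covariant functor, with $\mathbf{K}(g):\mathbf{K}(X)\to\mathbf{K}(X')$ sending $\eta$ to $h\mapsto\eta(h\circ g)$, and one checks the intertwining relation $Q_{X'}\circ\mathbf{K}(g)=T(g)\circ Q_X$.

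I would then define $\Phi(X)=Q_X^{-1}(e_X(X))\subseteq\mathbf{K}(X)$. Since $\mathbf{K}(X)$ is compact Hausdorff, $\pi_0(\mathbf{K}(X))$ is Hausdorff, and $e_X(X)\subseteq\pi_0(\mathbf{K}(X))$, Lemma \ref{basicprop} yields a canonical homeomorphism $\pi_0(\Phi(X))\cong e_X(X)\cong X$ and shows that $q_{\Phi(X)}$ is an onto perfect map; moreover $\Phi(X)$ is Tychonoff, being a subspace of $K^{C(X)}$. To put a functor structure on $\Phi$, the key point is that $\mathbf{K}(g)$ carries $\Phi(X)$ into $\Phi(X')$: if $\eta\in Q_X^{-1}(e_X(X))$, say $Q_X(\eta)=e_X(x)$, then $Q_{X'}(\mathbf{K}(g)(\eta))=T(g)(e_X(x))=e_{X'}(g(x))\in e_{X'}(X')$, using the two intertwining relations above. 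Thus $\Phi(g):=\mathbf{K}(g)|_{\Phi(X)}$ is well defined, and functoriality of $\Phi$ is inherited from that of $\mathbf{K}$.

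Finally I would verify that the isomorphisms $\theta_X:\pi_0(\Phi(X))\to X$ assemble into a natural transformation $\pi_0\circ\Phi\cong\mathrm{Id}_{\mathbf{Tych}}$. Since the homeomorphism of Lemma \ref{basicprop} is induced by the perfect quotient $Q_X|_{\Phi(X)}$, naturality of $\theta$ reduces, via functoriality of $\pi_0$, to the restrictions to $\Phi(X)$ of the relations $Q_{X'}\circ\mathbf{K}(g)=T(g)\circ Q_X$ and $T(g)\circ e_X=e_{X'}\circ g$; this is a routine diagram chase. I expect the main obstacle to be bookkeeping the contravariant variation of the index set $C(X)$ — in particular establishing well-definedness of $\Phi$ on morphisms through the intertwining relation — rather than any deep point. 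Note also that functoriality forces us to use the full index set $C(X)$, whose cardinality may exceed $w(X)$, which is precisely why the weight equality $w(Y)=w(X)$ of Theorem \ref{mainthm} must be relinquished here.
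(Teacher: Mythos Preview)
Your proposal is correct and follows essentially the same route as the paper: replace the basis-dependent embedding of Theorem \ref{mainthm} by the canonical evaluation embedding $X\hookrightarrow[0,1]^{C(X,I)}$, pull back along the product map $(q_K)^{C(X,I)}:K^{C(X,I)}\to[0,1]^{C(X,I)}$, and use the induced reindexing maps along $g^{\ast}:C(X')\to C(X)$ to define $\Phi$ on morphisms. Your treatment is in fact more explicit than the paper's about the intertwining relations $Q_{X'}\circ\mathbf{K}(g)=T(g)\circ Q_X$ and $T(g)\circ e_X=e_{X'}\circ g$ and about why they force $\mathbf{K}(g)(\Phi(X))\subseteq\Phi(X')$ and naturality of $\theta$, but the underlying construction is the same.
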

\begin{proof}
Let $X$ be a Tychonoff space and $C(X,I)$ denote the set of all continuous functions $X\to [0,1]$. Since this set of functions separates points and closed sets, the natural map $i_X:X\to [0,1]^{C(X,I)}$, $i(x)(f)=f(x)$ to the direct product is an embedding. For convenience, we identify $X=i(X)$. We follow the same line of argument used in the proof of Theorem \ref{mainthm} except that we replace $[0,1]^{\mathbf{m}}$ and $K^{\mathbf{m}}$ with the compact spaces $[0,1]^{C(X,I)}$ and $K^{C(X,I)}$ respectively. The product map $Q=(q_{K})^{C(X,I)}:K^{C(X,I)}\to [0,1]^{C(X,I)}$ is perfect and we define $\Phi(X)=Q^{-1}(X)$. It follows that $\pi_0(\Phi(X))\cong X$ where $q|_{\Phi(X)}:\Phi(X)\to \pi_0(\Phi(X))$ is a perfect map.

To construct $\Phi$ on morphisms, suppose $h:X\to Y$ is a map of Tychonoff spaces. Let $m:K^{C(X,I)}\to K^{C(Y,I)}$ and $M:[0,1]^{C(X,I)}\to [0,1]^{C(Y,I)}$ be the canonical induced maps. Set $A=K^{C(X,I))}$ and $B=K^{C(Y,I)}$ so we can simply write the quotient maps identifying path components as $q_A$ and $q_B$ respectively. The naturality of the embeddings $i_X$ and quotient maps $q_X$ indicates that the following diagram commutes.
\[\xymatrix{
K^{C(X,I)} \ar[d]_{q_A} \ar[r]^-{m} & K^{C(Y,I)} \ar[d]^{q_B}\\
[0,1]^{C(X,I)} \ar[r]_-{M} & [0,1]^{C(Y,I)}\\
X \ar[u]^{i_X} \ar[r]_-{f} & Y \ar[u]_{i_Y}
}\]
Since $M(i_X(X))=i_Y(f(X))\subseteq i_Y(Y)$, we have $q_B(m(\Phi(X)))=q_B(m(q_{A}^{-1}(i(X))))=M(i_X(X))=i_Y(f(X))\subseteq i_Y(Y)$ and thus $m(\Phi(X))\subseteq q_{B}^{-1}(i_Y(Y))=\Phi(Y)$. Thus we may define $\Phi(h):\Phi(X)\to \Phi(Y)$ to be the restriction of $m$ to $\Phi(X)$. The rest of the details of confirming functorality of $\Phi$ and naturality of the homeomorphisms $\pi_0(\Phi(X))\to X$ are straightforward.
\end{proof}

\section{Application to topologized fundamental groups}

For a path-connected space $X$ with basepoint $x_0\in X$, let $\Omega(X,x_0)$ denote the space of based maps $(S^1,b_0)\to (X,x_0)$ with the compact-open topology. The path-component space $\pi_1(X,x_0)=\pi_0(\Omega(X,x_0))$ is the fundamental group equipped with the natural quotient topology. It is known that $\pi_1(X,x_0)$ is a quasitopological group in the sense that inversion is continuous and the group operation is continuous in each variable. However, $\pi_1(X,x_0)$ can fail to be a topological group \cite{Br10.1,Fab10,Fab11}. A general study of fundamental groups with the quotient topology appears in \cite{BFqtop}.

The following construction is taken from the paper \cite{Br10.1}. For $X$ a compact metric space let $W(X)=X\times S^1/X\times \{b_0\}$ where the image of $X\times \{b_0\}$ is the basepoint $w_0$. Certainly $W(X)$ is also a compact metric space. Equivalently, if $X_+=X\cup\{\ast\}$ is $X$ with an added isolated basepoint, then $W(X)\cong \Sigma(X_+)$ is the reduced suspension of $X_+$. We think of $W(X)$ as a wedge of circles where the circles are topologically parameterized by the space $X$. In particular, if $X=\coprod_{j\in J}A_j$ is a topological sum of contractible spaces $A_j$, then $W(X)$ is homotopy equivalent to $\bigvee_{j\in J}S^1$.

We recall the notion of free topological group in the senses of Markov \cite{Markov} and Graev \cite{Graev}. Since the groups $\pi_1(X,x_0)$ need not be Hausdorff, we do not assume any separation axioms.

\begin{definition}
The \textit{free Markov topological group} $F_M(Y)$ on the space $Y$ is the topological group equipped with a map $\sigma:Y\to F_M(Y)$ such that every map $f:Y\to G$ to a topological group $G$ extends uniquely to a continuous homomorphism $\widetilde{f}:F_M(Y)\to G$ such that $\sigma \circ\widetilde{f}=f$.

The \textit{free Graev topological group} $F_G(Y,y)$ on the based space $(Y,y)$ is the topological group equipped with a map $\sigma_{\ast}:Y\to F_G(Y,y)$ taking $y$ to the identity element $e$ such that every map $f:(Y,y)\to (G,1_{G})$ to a topological group $G$ extends uniquely to a continuous homomorphism $\widetilde{f}:F_G(Y,y)\to G$ such that $\sigma_{\ast} \circ\widetilde{f}=f$.
\end{definition}
\begin{remark}\label{ftgremark}
By the Adjoint Functor Theorem, the free Markov (Graev) topological group exists for all (based) spaces $Y$. The underlying group of $F_M(Y)$ is the free group on the underlying set of $Y$ and the underlying group of $F_G(Y,y)$ is the free group on $Y\backslash \{y\}$. The groups $F_M(Y)$ and $F_G(Y,y_0)$ are Hausdorff if $Y$ is functionally Hausdorff\footnote{A space $X$ is \textit{functionally Hausdorff} if for any distinct points $a,b\in X$, there is a continuous function $f:X\to[0,1]$ such that $f(a)=0$ and $f(b)=1$.} \cite{Thomas}. The universal maps $\sigma$ and $\sigma_{\ast}$, which are the inclusion of generators, are closed embeddings if $Y$ is Tychonoff. The homomorphism $F_M(Y)\to F_G(Y,y)$ which factors the normal subgroup generated by $\{y\}$ is a topological quotient map. In general, the topologies of $F_M(Y)$ and $F_G(Y,y)$ are fairly difficult to characterize. We refer to \cite{AT08,Samuel,Sipacheva,Thomas} for more on the existence and structure of free topological groups.
\end{remark}

\begin{proposition}\label{pathconnftg}
If $Y$ is path-connected, then so is $F_G(Y,y)$.
\end{proposition}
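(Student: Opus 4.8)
The plan is to show that the path component of the identity in $G=F_G(Y,y)$ is all of $G$. Write $G_0$ for this path component. The essential feature I would exploit is that $F_G(Y,y)$ is an \emph{actual} topological group (built via the Adjoint Functor Theorem in Remark \ref{ftgremark}), so its multiplication $m:G\times G\to G$ is jointly continuous and its inversion is continuous --- a property not available for the merely quasitopological fundamental group.

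First I would verify that $G_0$ is a subgroup of $G$. Given $a,b\in G_0$, choose paths $\alpha,\beta:[0,1]\to G$ from $e$ to $a$ and from $e$ to $b$ respectively. The pointwise product $t\mapsto m(\alpha(t),\beta(t))$ is continuous --- here joint continuity of $m$ is exactly what is used --- and is a path from $e=m(e,e)$ to $ab$, so $ab\in G_0$. Likewise $t\mapsto \alpha(t)^{-1}$ is a path from $e$ to $a^{-1}$ by continuity of inversion, so $a^{-1}\in G_0$. Thus $G_0$ is a subgroup.

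Next I would show that the free generators lie in $G_0$. The universal map $\sigma_{\ast}:Y\to G$ is continuous and sends $y$ to $e$. Since $Y$ is path-connected, for each $a\in Y$ there is a path $\gamma:[0,1]\to Y$ from $y$ to $a$; then $\sigma_{\ast}\circ\gamma$ is a path from $e$ to $\sigma_{\ast}(a)$, whence $\sigma_{\ast}(Y)\subseteq G_0$. By Remark \ref{ftgremark}, the underlying group of $G$ is free on $Y\setminus\{y\}$, so the elements $\sigma_{\ast}(a)$ with $a\in Y\setminus\{y\}$ generate $G$. A subgroup containing a generating set is the whole group, so $G_0=G$ and $G$ is path-connected.

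I do not expect a serious obstacle here: the only subtlety is remembering to invoke joint (rather than merely separate) continuity of multiplication when forming the product path, which is legitimate precisely because $F_G(Y,y)$ is a topological group. It is worth noting that the argument requires no separation hypothesis on $Y$ --- only continuity of $\sigma_{\ast}$, which holds by its definition as a map --- so the conclusion is genuinely unconditional on $Y$ beyond path-connectedness.
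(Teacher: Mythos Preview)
Your proof is correct and follows essentially the same route as the paper's: the paper's two-sentence argument observes that $\sigma_{\ast}(Y)$ is path-connected, contains the identity, and algebraically generates $F_G(Y,y)$, and then invokes the standard fact that the identity path component of a topological group is a subgroup. You have simply unpacked these two observations in full detail.
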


\begin{proof}
The image of $\sigma_{\ast}:Y\to F_G(Y,y)$ is path-connected and algebraically generates the entire group. Since the path-component of the identity in a topological group is a subgroup, this must be the entire group $F_G(Y,y)$.
\end{proof}

A topological space $X$ is said to be a $k_{\omega}$\textit{-space} if $X$ is the inductive limit of compact subspaces $X_1\subset X_2\subset ...$, i.e. $X=\bigcup_{n\geq 1}X_n$ and $C\subseteq X$ is closed if and only if $C\cap X_n$ is closed in $X_n$ for all $n$.

\begin{theorem}[Mack, Morris, Ordman \cite{MMO}] \label{komegathm} If $X$ is a Hausdorff $k_{\omega}$-space and $x_0\in X$, then $F_M(X)$ and $F_G(X,x_0)$ are Hausdorff $k_{\omega}$-spaces.
\end{theorem}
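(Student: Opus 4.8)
The plan is to prove the statement for the Markov group $F_M(X)$ first and then to deduce the Graev case $F_G(X,x_0)$ by realizing it as a quotient. Throughout, write $X=\bigcup_{n\ge 1}X_n$ as the inductive limit of an increasing sequence of compact subspaces. I would dispose of the separation axiom at the outset: a Hausdorff $k_{\omega}$-space is normal (each compact Hausdorff $X_n$ is normal, and normality survives passage to the weak-topology union), hence functionally Hausdorff, so Remark \ref{ftgremark} immediately yields that $F_M(X)$ and $F_G(X,x_0)$ are Hausdorff. It then remains only to exhibit the $k_{\omega}$-structure.

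For $F_M(X)$ I would first set up the natural filtration by word length. Identifying $X$ with its image under the closed embedding $\sigma$, let $X_n^{-1}$ denote the set of formal inverses of elements of $X_n$ and put $\widehat{X}_n=\{e\}\cup X_n\cup X_n^{-1}$, which is a compact subspace of $F_M(X)$ since inversion is continuous. Let $B_n$ be the image of the iterated multiplication map $\mu_n:\widehat{X}_n^{n}\to F_M(X)$, $(g_1,\dots,g_n)\mapsto g_1\cdots g_n$; this is exactly the set of elements of reduced length at most $n$. Because $F_M(X)$ is a topological group, $\mu_n$ is continuous, so each $B_n$ is compact, and plainly $F_M(X)=\bigcup_{n}B_n$ with $B_n\subseteq B_{n+1}$.

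The heart of the argument is to show that the group topology of $F_M(X)$ coincides with the inductive limit topology $\tau$ determined by the compacta $\{B_n\}$. One inclusion is automatic: the inclusions $B_n\hookrightarrow F_M(X)$ are continuous, so the identity map $(F(X),\tau)\to F_M(X)$ is continuous. For the reverse inclusion I would verify that $\tau$ is itself a group topology and that $X\to (F(X),\tau)$ is continuous; the universal property defining $F_M(X)$ then produces a continuous homomorphism $F_M(X)\to (F(X),\tau)$, forcing the two topologies to agree. Continuity of inversion for $\tau$ is straightforward, since inversion permutes the $B_n$. \emph{The main obstacle is the continuity of multiplication} $m:(F(X),\tau)\times(F(X),\tau)\to (F(X),\tau)$: inductive limits do not commute with products in general, so one cannot naively test continuity on the pieces. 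The key input is the product theorem for $k_{\omega}$-spaces, namely that the product of two $k_{\omega}$-spaces is again $k_{\omega}$ and carries the inductive limit topology of the compact products; thus $(F(X),\tau)^2=\bigcup_{n}(B_n\times B_n)$ as a $k_{\omega}$-space. Granting this, it suffices that each restriction $B_n\times B_n\to B_{2n}$ of $m$ be continuous, which it is, being a restriction of the continuous multiplication of the topological group $F_M(X)$. Hence $m$ is continuous on the whole product, establishing $F_M(X)\cong(F(X),\tau)$ as a $k_{\omega}$-space.

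Finally, for $F_G(X,x_0)$ I would invoke the fact recorded in Remark \ref{ftgremark} that the canonical homomorphism $q:F_M(X)\to F_G(X,x_0)$ factoring out the normal closure of $\{x_0\}$ is a topological quotient map. A quotient of a $k_{\omega}$-space is again $k_{\omega}$: writing $F_G(X,x_0)=\bigcup_n q(B_n)$ with each $q(B_n)$ compact, a set $C$ meeting every $q(B_n)$ in a closed set has, by continuity of $q|_{B_n}$, the property that $q^{-1}(C)$ meets every $B_n$ in a closed set and is therefore closed, so $C$ is closed. Thus $F_G(X,x_0)$ carries the inductive limit topology of the compacta $q(B_n)$, and combined with the Hausdorffness already noted this completes the proof.
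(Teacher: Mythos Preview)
The paper does not supply its own proof of this theorem: it is quoted verbatim as a result of Mack, Morris, and Ordman \cite{MMO} and is used as a black box in the proof of Theorem~\ref{mainthm2}. So there is no in-paper argument to compare your proposal against.

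That said, your outline is essentially the standard proof one finds in the $k_{\omega}$-literature (and is in the spirit of \cite{MMO}): exhibit the word-length filtration $B_n$ by compacta, show that the inductive-limit topology they determine is a Hausdorff group topology into which $X$ maps continuously, and then invoke the universal property of $F_M(X)$ to identify the two topologies; the Graev case follows since Hausdorff quotients of $k_{\omega}$-spaces are $k_{\omega}$. The one place you should be slightly more careful is the step ``$X\to (F(X),\tau)$ is continuous'': you implicitly use that on each defining compactum $B_n$ the $\tau$-subspace topology agrees with the $F_M(X)$-subspace topology (both are compact Hausdorff and comparable), so that each $X_n\hookrightarrow B_n\hookrightarrow (F(X),\tau)$ is continuous and hence, by the $k_{\omega}$-property of $X$, so is $X\to (F(X),\tau)$. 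With that made explicit, the argument is sound.
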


The following Theorem follows from Corollaries 1.2 and 4.23 of \cite{Br10.1}.

\begin{theorem}\label{fmpione} 
If $X$ is a $k_{\omega}$-space, then $\pi_1(W(X),w_0)$ is naturally isomorphic to the free Markov topological group $F_M(\pi_0(X))$ on the path-component space $\pi_0(X)$.
\end{theorem}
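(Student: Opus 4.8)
The plan is to produce an explicit continuous isomorphism of topological groups and then upgrade it to a homeomorphism using the $k_{\omega}$ structure on both sides. First I would record the combinatorial model of $W(X)$. Viewing $W(X)=\Sigma(X_+)$ as a wedge of circles parameterized by $X$, each point $x\in X$ determines a based loop $\ell_x$ running once around the circle $\{x\}\times S^1$, and a path in $X$ joining $x$ to $x'$ sweeps out a based homotopy $\ell_x\simeq \ell_{x'}$. Hence $[x]\mapsto [\ell_x]$ is a well-defined map $\pi_0(X)\to \pi_1(W(X),w_0)$, and signed concatenation of such loops yields maps $(X\sqcup X)^n\to \Omega(W(X),w_0)$ reading a word of length $n$ as a loop tracing the corresponding circles in order. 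The algebraic content, that these classes \emph{freely} generate, I would obtain by a van Kampen argument in the model case $X=\coprod_j A_j$ with each $A_j$ contractible, where $W(X)\simeq \bigvee_j S^1$ and the statement is classical; the general case reduces to this through the quotient $q_X:X\to\pi_0(X)$, since collapsing path components does not change which loops are based-homotopic.

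Second, I would assemble the comparison homomorphism $\Psi:F_M(\pi_0(X))\to \pi_1(W(X),w_0)$. Rather than appealing directly to the universal property of $F_M$, which requires the target to be a topological group and that is not yet known, I would build $\Psi$ concretely from the continuous concatenation maps above; passing to $\pi_0$ and invoking the algebraic identification of the first step shows $\Psi$ is a continuous bijective homomorphism. Some care is needed to verify that cancellation of backtracking (reduction of words) is compatible with based homotopy, so that $\Psi$ is well-defined on reduced words and genuinely injective.

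The hard part will be the third step: showing $\Psi$ is a homeomorphism, i.e. that the quotient topology on $\pi_1(W(X),w_0)$ coincides with the free topological group topology. Here I would use that $X$, hence $W(X)$, is a $k_{\omega}$-space and that $\pi_0(X)$ is again $k_{\omega}$ as a quotient of $X$, so that by Theorem \ref{komegathm} the group $F_M(\pi_0(X))=\bigcup_n B_n$ is the inductive limit of the compact sets $B_n$ of words of length at most $n$. Matching this against the filtration of $\pi_1(W(X),w_0)$ by path-components of loops of bounded complexity, the problem localizes to a bijection between compact pieces, and an inductive-limit argument then transfers the conclusion to $\Psi$. The technical heart, and the step I expect to cause the most trouble, is controlling the path-components of the loop space together with their separation behaviour: one must show that every loop of bounded complexity is homotopic, within a compact family, to a reduced representative, so that the compact filtration of $\Omega(W(X),w_0)$ descends to exactly the filtration $B_n$ of $F_M(\pi_0(X))$. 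This is precisely where the cited Corollaries 1.2 and 4.23 of \cite{Br10.1} do the work, and where a direct argument would require the most care.
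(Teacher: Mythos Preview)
The paper does not give its own proof of this theorem: it simply records that the statement follows from Corollaries~1.2 and~4.23 of \cite{Br10.1}. So there is no in-paper argument to compare against; your proposal is really a sketch of what you expect those cited results contain, and you say as much in your final paragraph.

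As an outline your three-step plan is broadly the right shape: build the canonical map from generators, establish the underlying group isomorphism, and then use the $k_{\omega}$ filtrations on both sides to upgrade a continuous bijection to a homeomorphism. Two places deserve more care. First, in step one your reduction ``the general case reduces to this through the quotient $q_X:X\to\pi_0(X)$, since collapsing path components does not change which loops are based-homotopic'' is not a proof as written: you would need to argue that the induced map $W(X)\to W(\pi_0(X))$ is a $\pi_1$-isomorphism, or else give a direct van Kampen argument on $W(X)$ itself (e.g.\ using that $W(X)$ is the unreduced suspension of $X$ with the two cone points identified, so for path-connected $X$ one gets $\pi_1=\mathbb{Z}$, and the general case is assembled component-by-component). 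Second, your third step is honest about where the difficulty lies---controlling the loop-space filtration so that it descends to the word-length filtration on $F_M(\pi_0(X))$---but this is exactly the substance of the cited corollaries, not something your sketch actually carries out. So your proposal is a correct roadmap, consistent with the paper's citation, but the genuine work remains deferred to \cite{Br10.1}.
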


By combining Theorem \ref{fmpione} with Theorem \ref{mainthm}, we realize all free topological groups on compact Hausdorff spaces as the fundamental group of a compact Hausdorff space. We remark that results in \cite{Br10.1} had no control over the compactness or metrizability of the construction since they depended on Harris' construction.

\begin{corollary}\label{unbasedrealization} \cite[Corollary 1.2]{Br10.1}
For every Tychonoff space $X$, there is a Tychonoff space $Y$ such that $w(Y)=w(X)$, $q:Y\to \pi_0(Y)=X$ is a perfect map, and $\pi_1(W(Y),w_0)\cong F_M(X)$.
\end{corollary}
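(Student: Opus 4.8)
The plan is to run the space produced by Theorem \ref{mainthm} through the fundamental-group computation of Theorem \ref{fmpione}. Given a Tychonoff space $X$, I would first apply Theorem \ref{mainthm} to obtain a Tychonoff space $Y$ together with a perfect quotient map $q_Y\colon Y\to\pi_0(Y)=X$ satisfying $w(Y)=w(X)$. Two of the three asserted properties of $Y$ are then immediate, and it remains only to compute $\pi_1(W(Y),w_0)$.

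For the fundamental group the natural tool is Theorem \ref{fmpione}: as soon as $Y$ is a Hausdorff $k_\omega$-space it yields $\pi_1(W(Y),w_0)\cong F_M(\pi_0(Y))=F_M(X)$. Since $Y$ is Tychonoff it is Hausdorff, so the only real content is to verify that $Y$ is a $k_\omega$-space. This is where the perfectness of $q_Y$ enters, together with the standard fact (used throughout the paper) that perfect maps are proper, i.e.\ the preimage of every compact set is compact.

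The cleanest instance, and the one emphasized in the text preceding the statement, is $X$ compact Hausdorff: then $Y=q_Y^{-1}(X)$ is a perfect preimage of a compact set, hence compact, hence trivially a $k_\omega$-space, and Theorem \ref{fmpione} applies directly to give $\pi_1(W(Y),w_0)\cong F_M(X)$. More generally, if $X$ is $k_\omega$, write $X=\bigcup_n X_n$ with each $X_n$ compact and $X_n\subseteq X_{n+1}$; then each $Y_n:=q_Y^{-1}(X_n)$ is compact and $Y=\bigcup_n Y_n$ is hemicompact, since any compact subset of $Y$ maps into some $X_n$ and is therefore contained in the corresponding $Y_n$. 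I would then argue that $Y$ carries the weak topology determined by the increasing sequence $\{Y_n\}$: given $C\subseteq Y$ meeting each $Y_n$ in a closed (hence compact) set, closedness of $q_Y$ forces $q_Y(C)\cap X_n=q_Y(C\cap Y_n)$ to be compact and so closed in $X_n$, whence $q_Y(C)$ is closed in the $k_\omega$-space $X$; I would then transfer this back through the closed map $q_Y$ to conclude that $C$ is closed in $Y$, so that $Y$ is $k_\omega$ and the computation goes through.

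The step I expect to be the main obstacle is precisely this last transfer: verifying that the subspace topology inherited by $Y\subseteq K^{\mathbf{m}}$ coincides with the weak topology of the decomposition $\{Y_n\}$. Hemicompactness is immediate, but upgrading it to the full $k_\omega$-property (equivalently, to $Y$ being a $k$-space with this filtration) is exactly where the closedness of $q_Y$, not merely its properness, must be exploited; this is the inverse-preservation of the $k_\omega$-property by perfect maps. For Tychonoff $X$ lying outside the $k_\omega$ world the self-contained route via Theorem \ref{fmpione} is unavailable, and there one falls back on the original \cite[Corollary 1.2]{Br10.1}, using Theorem \ref{mainthm} solely to control the weight of the realizing space and the perfectness of its quotient map.
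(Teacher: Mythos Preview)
The paper gives no explicit proof of this corollary; it is presented as an immediate consequence of combining Theorem~\ref{mainthm} (producing $Y$ with the stated weight and perfect quotient map) with Theorem~\ref{fmpione} (computing $\pi_1(W(Y),w_0)\cong F_M(\pi_0(Y))=F_M(X)$), the citation to \cite[Corollary~1.2]{Br10.1} covering the general Tychonoff case where the $k_\omega$ hypothesis of Theorem~\ref{fmpione} is unavailable. Your plan is exactly this, so the approaches coincide.

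Your extra care about verifying the $k_\omega$ hypothesis for $Y$ goes beyond what the paper spells out, and your treatment of the compact case is correct. One remark on your direct argument for the $k_\omega$ case: the ``transfer back'' step as written does not work, since knowing that $q_Y(C)$ is closed and that $q_Y$ is a closed map does not force $C$ itself to be closed unless $C$ is saturated (i.e.\ a union of fibers), which an arbitrary $C$ need not be. The clean repair is to quote the standard fact that perfect maps inversely preserve $k$-spaces \cite[3.7.25]{Eng89}; combined with your hemicompactness argument (which is fine: any compact $K\subseteq Y$ has $q_Y(K)\subseteq X_n$ for some $n$, hence $K\subseteq Y_n$) and the elementary observation that a hemicompact Hausdorff $k$-space is $k_\omega$, this fills precisely the gap you correctly flagged as the main obstacle.
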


\begin{remark}\label{specialcaseremark}
For the case $X=[0,1]^n$, we need not use the full power of Theorem \ref{mainthm}. In this case, we may replace $Y$ with $K^n$ since $\pi_0(K^n)\cong[0,1]^n$. Since $K^n\subseteq \mathbb{R}^{2n}$, the space $W(K^n)$ embeds as a subspace of $\mathbb{R}^{2n+1}$ when considered as a union of circles (indexed over $K^n$) with a single point in common.
\end{remark}

\begin{lemma}\label{quotientlemma}\cite[Lemma 4.4]{Brazasdiss}
If a 2-cell $e^2$ is attached to a path-connected space $Y$, then the inclusion $Y\to Y\cup e^2$ induces a homomorphism $\pi_1(Y,y_0)\to \pi_1(Y\cup e^2,y_0)$ which is a topological quotient map.
\end{lemma}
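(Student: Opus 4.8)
The plan is to work entirely at the level of based loop spaces and to transfer the quotient structure across the commuting square relating loop spaces to fundamental groups. Write $M=Y\cup e^2=Y\cup_\phi D^2$ for the attaching map $\phi\colon S^1\to Y$, let $p\in M$ be the image of the centre $0\in D^2$, and let $i\colon Y\hookrightarrow M$ be the inclusion. Composition with $i$ gives a continuous embedding $i_\#\colon \Omega(Y,y_0)\to\Omega(M,y_0)$ onto the subspace of loops with image in $Y$, and $i_*=\pi_0(i_\#)$ is the induced homomorphism, so that $i_*\circ q_Y=q_M\circ i_\#$. Since $q_Y$ is a quotient surjection, the standard fact that a continuous map whose composite with a quotient surjection is again a quotient map is itself a quotient map reduces the problem to showing that $q_M\circ i_\#$ is a quotient map. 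Unwinding the square and using that $q_Y,q_M$ are quotient maps, this is equivalent to the statement: if $V\subseteq\Omega(M,y_0)$ is saturated (a union of path-homotopy classes) and $V\cap i_\#(\Omega(Y,y_0))$ is open in $\Omega(Y,y_0)$, then $V$ is open in $\Omega(M,y_0)$. The reverse implication is automatic from continuity of $i_*$, and surjectivity of $i_*$ is the classical van Kampen computation, every loop in $M$ being path-homotopic into $Y$.

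The engine for propagating openness will be a family of \emph{local homotopy projections} into $Y$: I will show that every $\gamma\in\Omega(M,y_0)$ has an open neighbourhood $W_\gamma\subseteq\Omega(M,y_0)$ and a continuous map $R_\gamma\colon W_\gamma\to\Omega(Y,y_0)$ with $R_\gamma(\delta)$ path-homotopic to $\delta$ in $M$ for every $\delta\in W_\gamma$. Granting this, the saturation argument closes at once: for $\gamma\in V$ the set $R_\gamma^{-1}(V\cap\Omega(Y,y_0))$ is open in $W_\gamma$, hence in $\Omega(M,y_0)$; it contains $\gamma$ because $R_\gamma(\gamma)\simeq\gamma$ and $V$ is saturated, and it is contained in $V$ for the same reason. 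Thus $V$ is open, which is exactly what the first paragraph needs.

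For loops avoiding $p$ the projection is routine. The radial deformation retraction of $M\setminus\{p\}$ onto $Y$ (pushing the punctured open cell outward onto $\partial D^2=\phi(S^1)\subseteq Y$) fixes $Y$ and $y_0$ and induces a continuous map $r_\#\colon\Omega(M\setminus\{p\},y_0)\to\Omega(Y,y_0)$ with $r_\#(\delta)\simeq\delta$. Moreover $\Omega(M\setminus\{p\},y_0)$ is open in $\Omega(M,y_0)$, being the subbasic set of loops whose compact image lies in the open set $M\setminus\{p\}$. Hence for any $\gamma$ with $p\notin\gamma(S^1)$ we may take $W_\gamma=\Omega(M\setminus\{p\},y_0)$ and $R_\gamma=r_\#$.

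The real work, and the step I expect to be the main obstacle, is constructing $W_\gamma$ and $R_\gamma$ when $p\in\gamma(S^1)$. Here no global push-off can exist: a self-map of $M$ homotopic to the identity cannot avoid $p$ (on the cell this would retract $D^2$ onto $\partial D^2$), and even a global loop-level deformation off $p$ is impossible, since a continuous homotopy $\Omega(M,y_0)\times I\to\Omega(M,y_0)$ through path-homotopies ending in $\Omega(M\setminus\{p\},y_0)$ would induce the identity on $\pi_1(M)$ while factoring through $\pi_1(M\setminus\{p\})\cong\pi_1(Y)$, splitting the surjection $i_*$, which fails already for the torus. The construction must therefore be genuinely local and must exploit that a loop is one-dimensional: fixing $0<b<1$, on the open set $\gamma^{-1}(\{|z|<b\})$ where a loop dips below radius $b$ into the cell I replace each excursion, a path in the disk with both endpoints on the sphere $\{|z|=b\}$, by an arc running along $\{|z|=b\}$; this is path-homotopic rel endpoints to the original since the disk is simply connected, the modified loop misses the open disk, and composing with $r_\#$ sends it into $\Omega(Y,y_0)$. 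The delicate point is continuity in the compact-open topology: I must choose $b$ and the neighbourhood $W_\gamma$, constraining nearby loops on the compact set $\gamma^{-1}(\{|z|\ge b\})$, so that the entry and exit data and the selection of replacing arcs vary continuously with $\delta$, uniformly over $W_\gamma$, even though nearby loops may enter and leave the cell in a complicated and a priori unbounded fashion. Taming this continuity, rather than the homotopy bookkeeping, is the heart of the argument; once it is in hand, the reduction of the first paragraph completes the proof.
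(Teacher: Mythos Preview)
The paper does not prove this lemma; it merely cites it from \cite{Brazasdiss}. So there is no in-paper argument to compare against, and your proposal must be judged on its own.

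Your reduction is correct: showing that $i_*$ is quotient amounts to showing that every $q_M$-saturated $V\subseteq\Omega(M,y_0)$ whose trace on $\Omega(Y,y_0)$ is open is itself open, and your ``local homotopy projection'' scheme would indeed finish the argument. The case of loops missing $p$ is handled cleanly by the radial retraction, and you are right that no global push-off exists (so some local construction is unavoidable).

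The gap is that you do not actually construct $R_\gamma$ when $p\in\gamma(S^1)$; you only describe a heuristic and then say that ``taming this continuity\dots is the heart of the argument.'' As written, that heuristic has real obstacles. First, replacing an excursion with endpoints $z_1,z_2$ on $\{|z|=b\}$ by ``an arc along $\{|z|=b\}$'' requires a continuous choice of arc, and there is no such choice globally on $S^1\times S^1$ (the two arcs coincide only when $z_1=z_2$ and swap roles as the pair passes through antipodal position). Second, the decomposition of $\delta^{-1}(\{|z|<b\})$ into excursion intervals is wildly unstable in the compact--open topology: if $\gamma$ is tangent to $\{|z|=b\}$ at even one point, nearby $\delta$ can have arbitrarily many additional excursions, so there is no finite combinatorial pattern on which to hang a continuous selection. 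Third, since $\gamma$ is only continuous, its image can be space-filling in the cell, so tricks like ``choose a point of the cell missed by $\gamma$ and radially project from it'' are unavailable. None of these is necessarily fatal, but each requires a genuine idea that your outline does not supply; at present the proposal is a strategy with its central step left open rather than a proof.
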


\begin{theorem}\label{graevrealization}
Suppose $X$ is a compact Hausdorff space which is well-pointed at $x_0\in X$, i.e. $\{x_0\}\to X$ is a cofibration. Then $\pi_1(\Sigma X,w_0)\cong F_G(\pi_0(X),[x_0])$.
\end{theorem}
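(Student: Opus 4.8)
The plan is to relate $\Sigma X$ to the space $W(X)$ of Theorem \ref{fmpione} by collapsing a single circle (equivalently, attaching a single $2$-cell) and then to match the resulting algebraic and topological quotient against the defining quotient $F_M(\pi_0(X))\to F_G(\pi_0(X),[x_0])$. First I would record the geometric relationship between the two suspensions. Writing $\Sigma X=(X\times S^1)/(X\times\{b_0\}\cup\{x_0\}\times S^1)$ and $W(X)=(X\times S^1)/(X\times\{b_0\})$, there is a natural quotient map $W(X)\to\Sigma X$ that collapses exactly the circle $C$ which is the image of $\{x_0\}\times S^1$; thus $\Sigma X=W(X)/C$. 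This circle $C$ is precisely the loop $\gamma_{x_0}$ that traverses once around the circle indexed by $x_0$ and is based at $w_0$.

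Next I would exploit the well-pointedness hypothesis. Since $\{x_0\}\hookrightarrow X$ is a cofibration, so is $\{x_0\}\times S^1\hookrightarrow X\times S^1$, and after collapsing $X\times\{b_0\}$ this descends to a cofibration $C\hookrightarrow W(X)$. A cofibred subcircle may be coned off up to homotopy, so $\Sigma X=W(X)/C$ is based homotopy equivalent to the mapping cone $W(X)\cup_{\gamma_{x_0}}e^2$. Because the quotient-topologized $\pi_1$ is invariant under based homotopy equivalence, this yields an isomorphism of quasitopological groups $\pi_1(\Sigma X,w_0)\cong\pi_1(W(X)\cup_{\gamma_{x_0}}e^2,w_0)$.

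Now I would assemble the identifications. A compact Hausdorff space is trivially a $k_\omega$-space, so Theorem \ref{fmpione} supplies a natural isomorphism $\pi_1(W(X),w_0)\cong F_M(\pi_0(X))$ under which the class of $\gamma_{x_0}$ corresponds to the generator $[x_0]\in\pi_0(X)$. Attaching the $2$-cell along $\gamma_{x_0}$ kills this class, so by Lemma \ref{quotientlemma} the inclusion induces a topological quotient homomorphism $\pi_1(W(X),w_0)\to\pi_1(\Sigma X,w_0)$ with kernel the normal closure of $[x_0]$. On the other hand, by Remark \ref{ftgremark} the canonical map $F_M(\pi_0(X))\to F_G(\pi_0(X),[x_0])$ is also a topological quotient homomorphism with the same kernel. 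Two topological quotient maps out of the same space with identical fibers have canonically homeomorphic targets, and here both are group homomorphisms, so the induced bijection is an isomorphism of topological groups; hence $\pi_1(\Sigma X,w_0)\cong F_G(\pi_0(X),[x_0])$.

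The step I expect to be the main obstacle is the second one: verifying carefully that collapsing $C$ in $W(X)$ really is a based homotopy equivalence onto the mapping cone, i.e. that well-pointedness of $(X,x_0)$ genuinely yields a cofibration $C\hookrightarrow W(X)$ \emph{after} the identification $X\times\{b_0\}\mapsto w_0$, and that this equivalence is compatible with the quotient topology on $\pi_1$ rather than merely with the underlying abstract group. Once the cofibration and the homotopy invariance of the topologized $\pi_1$ are secured, the matching of the two quotient maps is purely formal.
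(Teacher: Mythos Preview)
Your proposal is correct and follows essentially the same route as the paper: identify $\Sigma X$ (up to homotopy equivalence via the well-pointedness hypothesis) with $W(X)\cup_{\gamma_{x_0}} e^2$, apply Theorem~\ref{fmpione} to get $\pi_1(W(X),w_0)\cong F_M(\pi_0(X))$, then match the topological quotient from Lemma~\ref{quotientlemma} against the quotient $F_M(\pi_0(X))\to F_G(\pi_0(X),[x_0])$ from Remark~\ref{ftgremark}. You are in fact more explicit than the paper about why compact Hausdorff spaces fall under Theorem~\ref{fmpione} and about the cofibration/collapse step that the paper asserts without elaboration.
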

\begin{proof}
Since $\{x_0\}\to X$ is a cofibration, $\Sigma X$ is homotopy equivalent to $W(X)\cup e^2$ where the 2-cell is attached along the loop $\ell:S^1\to W(X)$ where $\ell(t)$ is the image of $(x_0,t)$ in $W(X)$. Recall by Theorem \ref{fmpione} that $\pi_1(W(X),w_0)\cong F_M(\pi_0(X))$ and that $[x_0]\in \pi_0(X)$ denotes the path component of $x_0$ in $X$. According to Remark \ref{ftgremark}, there is an isomorphism of topological groups $F_G(\pi_0(X),[x_0])\cong F_M(\pi_0(X))/N$ where $N$ is the normal subgroup generated by $\{[x_0]\}$. The inclusion $W(X)\to W(X)\cup e^2$ induces a homomorphism $\pi_1(W(X),w_0)\to \pi_1(W(X)\cup e^2,w_0)$, which according to Lemma \ref{quotientlemma} is a topological quotient map whose kernel is precisely $N$. Thus $\pi_1(W(X)\cup e^2,w_0)\cong F_G(\pi_0(X),[x_0])$.
\end{proof}
\begin{corollary} \label{basedrealization}\cite[Corollary 1.2]{Br10.1}
For every compact Hausdorff (resp. metric) space $X$ and $x_0\in X$, there is a path-connected compact Hausdorff (resp. metric) space $Z$ and $w_0\in Z$ such that $\pi_1(Z,w_0)\cong F_G(X,x_0)$.
\end{corollary}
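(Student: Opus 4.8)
The plan is to bridge Theorem \ref{mainthm}, which realizes $X$ as a path-component space, with Theorem \ref{graevrealization}, which computes the fundamental group of a reduced suspension as a free Graev group on a path-component space. The only mismatch between these two results is that Theorem \ref{graevrealization} requires a \emph{well-pointed} compact Hausdorff space, while Theorem \ref{mainthm} offers no control over well-pointedness; closing this gap is the heart of the argument.

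First I would feed the Tychonoff space $X$ into Theorem \ref{mainthm} to obtain a space $Y$ together with a homeomorphism $\pi_0(Y)\cong X$ and a perfect quotient map $q_Y:Y\to\pi_0(Y)=X$. Since $q_Y$ is a perfect surjection onto the compact space $X$, the domain $Y$ is compact (perfect preimages of compact spaces are compact), and being Tychonoff it is in fact compact Hausdorff. In the metric case $w(X)=\aleph_0$, so $w(Y)=\aleph_0$ and the compact Hausdorff space $Y$ is second countable, hence metrizable; thus $Y$ is compact metric whenever $X$ is.

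Next I would arrange well-pointedness by the whisker construction. Choosing any $y_0\in Y$ with $q_Y(y_0)=x_0$, I set $Y'=(Y\sqcup[0,1])/(y_0\sim 0)$ and take as basepoint the free endpoint $y_0'$ of the attached arc. Attaching a compact arc at a single point preserves compactness, the Hausdorff property, and (in the metric case) metrizability, while the collar supplied by the arc makes $\{y_0'\}\hookrightarrow Y'$ a cofibration, so $Y'$ is well-pointed at $y_0'$. Because the arc lies entirely in the path component of $y_0$, the path components of $Y'$ are exactly those of $Y$ with the arc absorbed into $[y_0]$; a direct inspection of the quotient topology (every open $U\subseteq\pi_0(Y')$ pulls back to an open set in $[0,1]$ automatically, so the condition reduces to openness in $\pi_0(Y)$) then yields a homeomorphism $\pi_0(Y')\cong\pi_0(Y)\cong X$ under which $[y_0']$ corresponds to $x_0$. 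I expect this verification to be the main obstacle, since it is where compactness, metrizability, well-pointedness, and the invariance of the path-component space must all be checked simultaneously.

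Finally I would apply Theorem \ref{graevrealization} to the well-pointed compact Hausdorff space $Y'$ and put $Z=\Sigma Y'$, obtaining
\[
\pi_1(Z,w_0)\cong F_G\bigl(\pi_0(Y'),[y_0']\bigr)\cong F_G(X,x_0).
\]
The reduced suspension $Z$ is path-connected, and as a quotient of the compact Hausdorff space $Y'\times[0,1]$ (collapsing a closed set to the single basepoint) it is compact Hausdorff, and compact metric whenever $Y'$ is. This gives the desired space $Z$, and both the Hausdorff and metric cases follow uniformly, so no separate argument for the metric case is needed beyond tracking the weight in the first step.
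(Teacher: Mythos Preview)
Your proposal is correct and follows essentially the same route as the paper: apply Theorem~\ref{mainthm} to get a compact Hausdorff (resp.\ metric) $Y$ with $\pi_0(Y)\cong X$, attach a whisker at a point over $x_0$ to make the space well-pointed without disturbing $\pi_0$, and then invoke Theorem~\ref{graevrealization} on the reduced suspension. Your write-up is in fact more careful than the paper's on two points---you explicitly choose the attaching point in the fiber over $x_0$ (so that the basepoint class is $x_0$) and you track the metric case via the weight equality---but the underlying argument is the same.
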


\begin{proof}
Using Theorem \ref{mainthm}, construct a compact Hausdorff space $Y$ such that $\pi_0(Y)\cong X$. Pick a point $y\in Y$ and attach a copy of the unit interval to $Y$ where $1\sim y$. Call the resulting space $Y^+$ and let $y_{0}$ be the image of $0$, which we take to be the basepoint. Now $(Y^+,y_0)$ is well-pointed, and $\pi_0(Y^+)\cong \pi_0(Y)=X$. Set $Z=\Sigma Y^+$ and apply Theorem \ref{graevrealization}.
\end{proof}

\begin{example}\label{finalexample}
As in Remark \ref{specialcaseremark}, we may perform a more direct construction in the case $X=[0,1]^n$ and $x_0=\mathbf{0}\in X$. Set $y_0=\mathbf{0}\in Y=K^n$ and recall $\pi_0(Y)=[0,1]^n$. Let $Y^+$ be the space obtained by attaching a copy of the unit interval to $Y$ where $1\sim y_0$. Now we have the desired isomorphisms $\pi_1(\Sigma Y^+,w_0)\cong F_G(\pi_0(Y^+),[x_0])\cong F_G(\pi_0(Y),[k_0])\cong F_G([0,1]^n,\overline{0})$. 

Since $Y^+$ embeds in $\mathbb{R}^{2n}$, $\Sigma Y^+$ may be embedded as a compact subspace of $\mathbb{R}^{2n+1}$. Moreover, since $K^n$ is the intersection of contractible polyhedra, so are both of the spaces $Y^+$ and $\Sigma Y^+$. Hence, $\Sigma Y^+$ is shape equivalent to a point. These examples are remarkable, because $\Sigma Y^+$ is a finite dimensional compact metric space and yet $\pi_1(\Sigma Y^+,w_0)$ is a \textit{path-connected} topological group.
\end{example}
The first shape homotopy group of a paracompact Hausdorff space is the inverse limit $\check{\pi}_1(X,x_0)=\varprojlim_{\mathscr{U}}\pi_1(|N(\mathscr{U})|,v_0)$ of fundamental groups of geometric realizations of nerves of open covers $\mathscr{U}$ of $X$. Since we only require general properties of this group, we refer the reader to \cite{MS82} for details of the construction. The nerve $|N(\mathscr{U})|$ is a simplicial complex and hence has discrete fundamental group \cite{CM}. Hence $\check{\pi}_1(X,x_0)$ is naturally topologized as an inverse limit of discrete groups. It is known that there is a canonical homomorphism $\psi:\pi_1(X,x_0)\to \check{\pi}_1(X,x_0)$ which is continuous with respect to the quotient topology on $\pi_1(X,x_0)$ \cite{BFqtop}. The easiest way to ensure that $\pi_1(X,x_0)$ is Hausdorff is to check that $\psi$ is injective (since any space which injects into a Hausdorff space is Hausdorff). For example, $\psi$ is known to be injective for all one-dimensional spaces \cite{EK98} and planar spaces \cite{FZ05}. In the case when $\psi$ is injective, one may conclude that $\pi_1(X,x_0)$ is functionally Hausdorff since it continuously injects into the functionally Hausdorff space $\check{\pi}_1(X,x_0)$. However, $\phi$ is not typically an embedding \cite{Fab05.3}, so even when it is injection one can not immediately conclude that $\pi_1(X,x_0)$ enjoys stronger separation axioms.\\\\

Finally, we complete the proof of Theorem \ref{mainthm2}. Recall that a topological space is \textit{cosmic} if it is the continuous image of a separable metric space.\\\\

\noindent\textit{Proof of Theorem \ref{mainthm2}.} Using the case $n=1$ from Example \ref{finalexample}, we may construct a compact suspension space $X\subset \mathbb{R}^3$ such that $\pi_1(X,x_0)\cong F_G([0,1],0)$. Since $[0,1]$ is functionally Hausdorff, $F_G([0,1],0)$ is Hausdorff. In fact, it is a result of M.~Zarichnyi \cite{Zarichnyi} that $F_G([0,1],0)$ is homeomorphic to $\mathbb{R}^{\infty}$ topologized as the direct limit of finite Euclidean space $\mathbb{R}^n$, which is a locally convex linear topological space. Since $[0,1]$ is a $k_{\omega}$-space, it follows from Theorem \ref{komegathm} that $F_G([0,1],0)$ is a $k_{\omega}$-space. Lastly, since $X$ is a compact metric space, $\Omega(X,x_0)$ is a separable metric space \cite[4.2.17 and 4.2.18]{Eng89} and it follows that the quotient $\pi_1(X,x_0)$ is cosmic. Every Hausdorff cosmic topological group is regular and Lindel\"of and is therefore $T_4$.

Finally, recall that the first shape homotopy group $\check{\pi}_1(X,x_0)$ is topologized as an inverse limit of discrete groups and is therefore totally path disconnected. But $\pi_1(X,x_0)$ is path-connected by Proposition \ref{pathconnftg}. Hence, the canonical continuous homomorphism $\psi:\pi_1(X,x_0)\to \check{\pi}_1(X,x_0)$ must be the trivial homomorphism. \hfill $\square$

\end{document}